\documentclass[leqno]{amsart}
\usepackage{times}
\usepackage{amsfonts,amssymb,amsmath,amsgen,amsthm}
\usepackage{hyperref}
\usepackage{color}
\newcommand{\msc}[2][2000]{%
  \let\@oldtitle\@title%
  \gdef\@title{\@oldtitle\footnotetext{#1 \emph{Mathematics subject
        classification.} #2}}%
}

\theoremstyle{plain}
\newtheorem{theorem}{Theorem}[section]

\newtheorem{corollary}[theorem]{Corollary}
\newtheorem{proposition}[theorem]{Proposition}

\theoremstyle{remark}
\newtheorem{remark}[theorem]{Remark}

\def\C{{\mathbb C}}
\def\R{{\mathbb R}}
\def\O{\mathcal O}
\def\F{\mathcal F}

\def\bu{{\bf u}}

\def\({\left(}
\def\){\right)}
\def\<{\left\langle}
\def\>{\right\rangle}
\def\le{\leqslant}
\def\ge{\geqslant}

\def\Eq#1#2{\mathop{\sim}\limits_{#1\rightarrow#2}}
\def\Tend#1#2{\mathop{\longrightarrow}\limits_{#1\rightarrow#2}}

\def\d{{\partial}}
\def\eps{\varepsilon}
\def\om{\omega}
\def\si{{\sigma}}
\def\gg{\gamma}

\DeclareMathOperator{\RE}{Re}
\DeclareMathOperator{\IM}{Im}

\numberwithin{equation}{section}

\begin{document}

\title[Solutions to a singular Vlasov equation from a semiclassical
perspective]{Monokinetic solutions 
  to a singular Vlasov equation from a semiclassical perspective} 
\author[R. Carles]{R\'emi Carles}
\address{CNRS \& Univ. Montpellier\\Institut Montpelli\'erain
  Alexander Grothendieck
\\CC51\\F-34095 Montpellier}
\email{Remi.Carles@math.cnrs.fr}

\author[A. Nouri]{Anne Nouri}
\address{Aix-Marseille University, CNRS, Centrale Marseille\\
Institut de Math\'ematiques de Marseille, UMR 7373\\ F-13453 Marseille}
\email{anne.nouri@univ-amu.fr}

\begin{abstract}
  Solutions to a singular one-dimensional
  Vlasov equation are obtained as the semiclassical limit of the
  Wigner transform associated to a logarithmic Schr\"{o}dinger
  equation. Two frameworks are considered, regarding in particular the
  initial position density: Gaussian initial density, or smooth
  initial density away from vacuum. For Gaussian initial densities,
  the analysis also yields global solutions to the isothermal Euler
  system that do not enter the frame of regular solutions to
  hyperbolic systems by P.~D.~Lax.  
\end{abstract}
\thanks{RC was supported by the French ANR projects
  SchEq (ANR-12-JS01-0005-01) and BECASIM
  (ANR-12-MONU-0007-04). AN was supported by the French A*MIDEX project (Nr. ANR-11-IDEX-0001-02)} 
\maketitle

\section{Introduction and main results}
This paper is concerned by the Cauchy problem for the Vlasov equation 
\begin{equation}\label{vla1}
\partial _tf+\xi \partial _x f-\lambda \big( \partial _x\ln(\rho )\big) \partial _\xi f= 0,\quad f(0,x,\xi )= f_0(x,\xi ),\quad t>0, (x,\xi )\in \R ^2,
\end{equation}
where $\lambda \neq 0$ and $\rho (t,x)= \int _\R f(t,x,\xi )d\xi $. For $\lambda >0$ it arises in plasma physics, e.g. for quasineutral plasmas
in the core or tokamaks when one focuses on the direction of the magnetic lines. There, $f$ denotes the ionic distribution
function and the electrons of the plasma are assumed
adiabatic. \\
Due to the derivative of the density $\rho $ with respect to space in the force term $\partial _x\ln(\rho )$, this equation is highly singular. The Cauchy problem can in particular be proven to be well-posed for very specific initial data like mono-kinetic distribution functions of the form
\begin{eqnarray*}
f_0(x,\xi )=\rho _0(x)dx\otimes \delta _{\xi = v_0(x)},
\end{eqnarray*}
with time-dependent mono-kinetic solutions of the form 
\begin{eqnarray*}
f(t,x,\xi )=\rho (t,x)dx\otimes \delta _{\xi = v(t,x)}.
\end{eqnarray*}
From a fluid dynamics perspective for $\lambda >0$, it is well-known that $f(t,x,\xi ) = \rho (t,x)dx\otimes \delta _{\xi = v(t,x)}$ is a distributional solution of \eqref{vla1} if and only if its moments
\begin{eqnarray*}
\rho (t,x)= \int f(x,\xi )d\xi \quad \text{and}\quad \rho (t,x)v(t,x)= \int \xi f(t,x,\xi )d\xi 
\end{eqnarray*}
are solutions of the isothermal Euler system
\begin{equation}\label{isen1}
\left\{
  \begin{aligned}
    & \partial _t\rho +\partial _x(\rho v)= 0,\\
& \partial _t(\rho v)+\partial _x\( \rho v^2+\lambda \rho \)= 0.
  \end{aligned}
\right.
\end{equation}
This paper addresses the existence of mono-kinetic solutions to \eqref{vla1}, as limits of the Wigner transform of solutions to a logarithmic Schr\"{o}dinger equation introduced
in \cite{BiMy76} in the context of wave mechancis. As noticed there,
the evolution of initial Gaussian data can be computed rather
explicitly, a remark which yields our first main result:
\begin{theorem}\label{th1}
Let $\rho_*,\si_0>0$ and $\omega_0,p_0\in \R$. Set
\begin{equation*}
  \rho_0(x) =\rho_* e^{-\si_0x^2},\quad v_0(x) = \om_0 x+p_0,
\end{equation*}
and consider the ordinary differential equation
\begin{equation}\label{eq:theta}
  \ddot \gg = \frac{2\lambda \si_0}{\gg},\quad \gg(0)=1,\quad
  \dot\gg(0)=\omega_0. 
\end{equation}
Suppose that \eqref{eq:theta} has a solution $\gg\in C^2([0,T[ )$. Set
\begin{equation*}
  \rho(t,x) =\frac{\rho_*}{\gg(t)}
  e^{-\si_0(x-p_0t)^2/\gg(t)^2},\quad v(t,x) = \frac{\dot
    \gg(t)}{\gg(t)} x+p_0.
\end{equation*}
Then 
  \begin{equation*}
    \mu = \rho(t,x)dx\otimes \delta_{\xi=v(t,x)}
  \end{equation*}
is a measure solution to \eqref{vla1} on $[0,T[ $, with $\mu_{\mid
  t=0}=\rho_0dx \otimes \delta_{\xi=v_0}$.\\
In the case $\lambda>0$, the solution $\gamma$ is globally defined and
smooth, so $T$ can be taken arbitrarily large. In addition, it safisfies
\begin{equation*}
  \gg(t) \Eq t \infty  2  t \sqrt{\lambda \si_0\ln t} \hspace*{0.02in},
\quad\dot\gg(t)\Eq t \infty 2\sqrt{\lambda \si_0 \ln t} .
\end{equation*}
On the other hand, for $\lambda<0$, the solution $\gg$ becomes
singular in finite time. 
\end{theorem}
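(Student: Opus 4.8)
The argument has two independent components: verifying that $\mu$ is a measure solution of \eqref{vla1}, and studying the ODE \eqref{eq:theta}. Note first that, since $1/\gg$ is smooth at $\gg=1$, the Cauchy--Lipschitz theorem provides a unique maximal $C^\infty$ solution of \eqref{eq:theta}, so the qualitative claims below amount to describing its maximal interval.

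\emph{The measure $\mu$ solves \eqref{vla1}.} Since $\gg\in C^2([0,T[)$ with $\gg>0$, the maps $\rho$ and $v$ are smooth on $[0,T[\times\R$ and $\rho>0$; in particular $\partial_x\ln\rho(t,x)=-2\si_0(x-p_0t)/\gg(t)^2$ is a smooth function, so the force term in \eqref{vla1} is unambiguous and the weak formulation against $\varphi\in C_c^\infty([0,T[\times\R^2)$ makes sense for the monokinetic measure $\mu$. Inserting $\mu$ into this formulation and using $\xi=v(t,x)$ on the support, the transport part, after recognising $\partial_t\varphi+v\partial_x\varphi$ as the material derivative of $(t,x)\mapsto\varphi(t,x,v(t,x))$ along the flow of $v$ minus $(\partial_tv+v\partial_xv)\partial_\xi\varphi$, reduces the identity to the requirement that $(\rho,v)$ solve $\partial_t\rho+\partial_x(\rho v)=0$ and $\partial_tv+v\partial_xv=-\lambda\partial_x\ln\rho$ --- equivalently, that $(\rho,\rho v)$ solve the isothermal Euler system \eqref{isen1}, as recalled above. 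I would then check both identities by direct substitution: the continuity equation holds because $\rho$ is the push-forward of $\rho_0\,dx$ by the affine flow of $v$, while in the momentum equation the terms linear in $x-p_0t$ cancel and what remains is precisely $\ddot\gg/\gg=2\lambda\si_0/\gg^2$, i.e.\ \eqref{eq:theta}; the trace at $t=0$ reduces to $\gg(0)=1$, $\dot\gg(0)=\om_0$.

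\emph{Energy and the case $\lambda>0$.} Multiplying \eqref{eq:theta} by $\dot\gg$ gives, on the maximal interval of existence, the conservation law $\dot\gg(t)^2=\om_0^2+4\lambda\si_0\ln\gg(t)$. If $\lambda>0$ this forces $\gg(t)\ge e^{-\om_0^2/(4\lambda\si_0)}>0$, so the right-hand side $2\lambda\si_0/\gg$ of \eqref{eq:theta} stays bounded on bounded time intervals: the solution is global in forward time and, $s\mapsto 1/s$ being smooth away from $0$, it is $C^\infty$. It is eventually increasing and unbounded (if $\gg$ stayed bounded, $\ddot\gg$ would be bounded below by a positive constant, forcing $\dot\gg\to+\infty$, a contradiction). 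For the asymptotics, for $t$ large one has $\dot\gg=\sqrt{\om_0^2+4\lambda\si_0\ln\gg}>0$, hence $t=\textup{const}+\int^{\gg(t)}ds/\sqrt{\om_0^2+4\lambda\si_0\ln s}$; an elementary comparison (l'Hospital's rule) gives $\int^{\Gamma}ds/\sqrt{\ln s}\Eq\Gamma\infty\Gamma/\sqrt{\ln\Gamma}$, so $2\sqrt{\lambda\si_0}\,t\Eq t\infty\gg(t)/\sqrt{\ln\gg(t)}$; taking logarithms and using $\ln\ln\gg=o(\ln\gg)$ yields $\ln\gg(t)\Eq t\infty\ln t$, and then $\gg(t)\Eq t\infty 2t\sqrt{\lambda\si_0\ln t}$ and $\dot\gg(t)\Eq t\infty 2\sqrt{\lambda\si_0\ln t}$.

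\emph{The case $\lambda<0$, and the main difficulty.} Here the same identity reads $\dot\gg^2=\om_0^2-4|\lambda\si_0|\ln\gg$ and $\ddot\gg<0$ throughout, so $\gg$ is strictly concave; if $\om_0>0$ it first rises to $\gg_{\max}=e^{\om_0^2/(4|\lambda\si_0|)}$, and in every case $\gg$ then decreases with $\dot\gg=-\sqrt{\om_0^2-4|\lambda\si_0|\ln\gg}\to-\infty$ as $\gg\to0^+$. The solution is smooth while $\gg>0$, and the time for $\gg$ to decrease from its present value to $0$ equals $\int_{0}^{\gg(t)}ds/\sqrt{\om_0^2-4|\lambda\si_0|\ln s}$, which converges at the endpoint $s=0$ (with $s=e^{-u}$ it becomes $\int^{\infty}e^{-u}u^{-1/2}\,du<\infty$); hence $\gg$ vanishes at a finite time $T^*$ with $\dot\gg(t)\to-\infty$ as $t\to T^*$, i.e.\ the solution becomes singular in finite time. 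The substitutions in the first step and the ODE comparisons are routine; the one point that requires real care is making the equivalences of the second step rigorous --- in particular the self-consistent passage $\ln\gg(t)\sim\ln t$, which must be justified via $\ln\ln\gg=o(\ln\gg)$ rather than by a merely formal separation of variables.
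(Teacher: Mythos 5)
Your proposal is correct, but for the main assertion it takes a genuinely different route from the paper. In Section~\ref{sec:gaussian} the paper does not verify \eqref{vla1} for $\mu$ by direct substitution: it constructs an explicit Gaussian solution \eqref{eq:ugauss} of the scaled logarithmic Schr\"odinger equation \eqref{schrod}--\eqref{init1}, reduces the Riccati equation \eqref{eq:Omega} for the complex width to the real ODE for $\gg^\eps$ (a perturbation of \eqref{eq:theta} by $\eps^2\si_0^2/(\gg^\eps)^3$), computes the Wigner transform \eqref{df2-wig} in closed form, lets $\eps\to0$ to obtain $\mu$ as in \eqref{df-mu}, and then invokes the equivalence between monokinetic solutions of \eqref{vla1} and solutions of \eqref{isen1} (the weak-formulation computation is carried out in general in the proof of Theorem~\ref{wig}). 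Your direct check --- continuity via push-forward along the affine flow, momentum equation reducing to \eqref{eq:theta} --- is more elementary and fully proves the statement as worded, at the price of losing the semiclassical derivation that is the paper's actual point. One caveat: for $p_0\neq 0$ the velocity must be the centered one, $v(t,x)=\frac{\dot\gg(t)}{\gg(t)}(x-p_0t)+p_0$, as in \eqref{df-mu}; the formula $v=\frac{\dot\gg}{\gg}x+p_0$ printed in the theorem is a misprint (with it, the momentum equation leaves an uncancelled term $p_0\dot\gg/\gg+2\lambda\si_0p_0t/\gg^2$). Your computation, which cancels terms linear in $x-p_0t$, tacitly uses the correct centered form, so nothing breaks, but you should state the correction explicitly. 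On the ODE side your argument coincides with the paper's for $\lambda>0$ (same energy identity \eqref{energy1}, same lower bound and unboundedness argument, same separation of variables; you replace the paper's appeal to the Dawson function by l'H\^opital, which is equivalent, and you rightly identify the bootstrap $\ln\gg\sim\ln t$ as the step needing care). For $\lambda<0$ your argument is sharper than the paper's soft contradiction: you show quantitatively that $\gg$ vanishes at a finite time $T^*$ with $\dot\gg\to-\infty$, via convergence of $\int_0 ds/\sqrt{\om_0^2-4|\lambda|\si_0\ln s}$ at $s=0$.
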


\begin{remark}
 In the case $\lambda >0$ the
global smooth solutions to the isothermal Euler system that are
obtained as part of the result, do not enter the frame of regular
solutions developed by Lax \cite{Lax73}. Indeed the Lax solutions
require bounded initial data, whereas the initial velocity in Theorem
\ref{th1} is unbounded.
\end{remark}
\begin{remark}
  To our knowledge, the case $\lambda<0$ does not correspond to a
  physical model related to \eqref{vla1}. However, the logarithmic
  nonlinear Schr\"odinger presented in Section~\ref{sec:nls-vlasov}
  was introduced initially exactly in the case $\lambda <0$
  (\cite{BiMy76}). We will see that the analysis of this case
  requires very little extra effort.  
\end{remark}
Our second result deals with more general solutions to \eqref{vla1} such that $\rho $ remains
 bounded away from zero. To do so, we do not consider $\rho\in
 H^s(\R)$ the standard Sobolev space, or $\rho \in
   \rho_*+H^s(\R)$ for 
 some $\rho_*>0$, but rather Zhidkov spaces, as
introduced in \cite{ZhidkovLNM,Zhidkov}, and further analyzed in
\cite{Gallo} in the case of Schr\"odinger equations. For $s\ge 1$, we
set
\begin{equation*}
  X^s(\R)= \left\{f\in L^\infty(\R),\quad f' \in H^{s-1}(\R)\right\}. 
\end{equation*}
 Note that being in dimension one, 
 $H^s(\R)\subset X^s(\R)$ holds for all $s\ge 1$, and $X^s(\R)$ is an algebra.\\
 The second result of the paper is the following theorem.

\begin{theorem}\label{th2}
Let $\lambda >0$ and $s\ge 2$. Suppose that $(\rho _0, \Phi _0)\in
X^s(\R )\times C(\R )$ with 
$\Phi_0'\in X^s(\R)$ and 
$\rho_0(x)\ge \rho _{0*}$ for some positive constant $\rho _{0*}$.\\ 
There are $T>0$, $\rho \in C([ 0,T[ ;
X^{s}(\R ))$, $\Phi\in C([0,T[ \times \R)$, with $
\d_x \Phi\in C([0,T[ ;X^s(\R)),$
such that  
\begin{eqnarray*}
\mu = \rho (t,x)dx\otimes \delta _{\xi = \partial _x\Phi (t,x)}\quad
  \text{with}\quad (\rho ,\Phi )\Big|_{t=0}= (\rho _0, \Phi _0), 
\end{eqnarray*}
is a measure solution to \eqref{vla1}. \\
Moreover, $\rho (t,\cdot )$ is bounded from below by a positive decreasing function of time.
\end{theorem}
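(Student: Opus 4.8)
The plan is to construct the pair $(\rho,v)$ with $v=\partial_x\Phi$ directly as the solution of a quasilinear hyperbolic system, in the spirit of the WKB/Grenier description of the semiclassical limit of the logarithmic Schr\"odinger equation of Section~\ref{sec:nls-vlasov}: there the eikonal and transport equations reduce, after setting the Planck constant to zero, precisely to the isothermal Euler system \eqref{isen1}, together with the phase relation $\partial_t\Phi+\tfrac12(\partial_x\Phi)^2+\lambda\ln\rho=0$. By the equivalence recalled in the introduction, it suffices to solve \eqref{isen1} for $(\rho,\rho v)$ with $\rho$ bounded away from zero, and to produce a compatible phase $\Phi$. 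Since $\rho>0$ is wanted, I would set $q=\ln\rho$; using the continuity equation to rewrite the momentum equation, \eqref{isen1} becomes the autonomous system
\begin{equation*}
  \partial_t q+v\,\partial_x q+\partial_x v=0,\qquad
  \partial_t v+v\,\partial_x v+\lambda\,\partial_x q=0,
\end{equation*}
that is $\partial_t u+A(u)\partial_x u=0$ with $u=(q,v)$ and $A(u)=\begin{pmatrix} v&1\\ \lambda&v\end{pmatrix}$. The crucial point is that the singular coefficient $\partial_x\ln\rho$ of \eqref{vla1} has turned into the harmless linear term $\partial_x q$, and that for $\lambda>0$ the matrix $A(u)$ is symmetrized by the \emph{constant} positive matrix $S=\mathrm{diag}(\lambda,1)$, so the system is symmetric hyperbolic; the lower bound $\rho_0\ge\rho_{0*}$ now serves only to make $q_0=\ln\rho_0$ meaningful.

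Next I would check that this change of unknowns is licit at the level of Zhidkov regularity. Using that $X^s(\R)$ is an algebra, that it acts by multiplication on $H^{s-1}(\R)$ for $s\ge 2$, and the Gallo-type facts that $f\mapsto 1/f$ preserves $X^s(\R)$ on functions bounded away from zero while $f\mapsto e^f$ and $f\mapsto\ln f$ (for $f\ge c>0$) preserve $X^s(\R)$, one gets $q_0=\ln\rho_0\in X^s(\R)$ from $(\rho_0,\rho_0^{-1})\in X^s(\R)^2$, and conversely $\rho=e^q\in C([0,T[;X^s(\R))$ whenever $q\in C([0,T[;X^s(\R))$, with $\rho(t,\cdot)\ge e^{-\|q(t)\|_{L^\infty}}>0$.

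The technical heart is then local well-posedness of the symmetric hyperbolic system in $X^s(\R)\times X^s(\R)$, $s\ge 2$, and this is where I expect the main obstacle to lie: one works in Zhidkov spaces rather than in $H^s$. I would split the estimate as in Gallo's analysis of the Schr\"odinger equation. The derivative $w=\partial_x u$ solves $\partial_t w+A(u)\partial_x w=-(\partial_x v)\,w$, again symmetric hyperbolic with a semilinear quadratic source; a standard $H^{s-1}$ energy estimate with the symmetrizer $S$ (using that $H^{s-1}$ is an algebra embedded in the bounded continuous functions for $s\ge 2$, that $A$ is affine so $\partial_x^k A(u)=(\partial_x^k v)I$ for $k\ge 1$, and Moser/commutator estimates) yields $\frac{d}{dt}\|w\|_{H^{s-1}}\lesssim(1+\|w\|_{H^{s-1}})\|w\|_{H^{s-1}}$, hence an a priori bound on some $[0,T[$, $T>0$. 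The $L^\infty$ bound on $u$ itself is recovered from the integrated equation: $\|u(t)\|_{L^\infty}\le\|u_0\|_{L^\infty}+\int_0^t\|A(u)\|_{L^\infty}\|\partial_x u\|_{L^\infty}\,ds\le\|u_0\|_{L^\infty}+\int_0^t(1+\|u\|_{L^\infty})\|w\|_{H^{s-1}}\,ds$, so by Gr\"onwall $\|u(t)\|_{L^\infty}$ is dominated by an explicit quantity increasing in $t$; this is exactly what produces the announced positive \emph{decreasing} lower bound $\rho(t,\cdot)\ge e^{-\|q(t)\|_{L^\infty}}$. Existence then follows from a standard approximation scheme (Friedrichs mollification of the quasilinear term, or parabolic regularization) together with these uniform bounds and a compactness/limiting argument, and uniqueness from the same energy method applied to the difference of two solutions. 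The key structural feature making this work is the decoupling of the $L^\infty$ control of $u$, obtained from the equation itself, from the $H^{s-1}$ energy estimate on $\partial_x u$.

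Finally I would reconstruct the phase and conclude. Taking $v_0=\Phi_0'\in X^s(\R)$ as initial datum for the $v$-equation and setting $\Phi(t,x)=\Phi_0(x)-\int_0^t\big(\tfrac12 v^2+\lambda q\big)(s,x)\,ds$, the momentum equation gives $\partial_x\Phi(t,\cdot)=\Phi_0'+\int_0^t\partial_t v(s,\cdot)\,ds=v(t,\cdot)$ on $[0,T[$ (so $\Phi$ solves the eikonal equation $\partial_t\Phi+\tfrac12(\partial_x\Phi)^2+\lambda\ln\rho=0$), with $\Phi\in C([0,T[\times\R)$ since $\Phi_0\in C(\R)$ and $v^2,q\in C([0,T[;X^s(\R))$, and $\partial_x\Phi\in C([0,T[;X^s(\R))$. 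Then $(\rho,v)=(e^q,v)$ solves \eqref{isen1} in $C([0,T[;X^s(\R))^2$, with $\rho$ bounded below by a positive decreasing function of time and $(\rho,\Phi)|_{t=0}=(\rho_0,\Phi_0)$; by the equivalence recalled in the introduction — a direct integration by parts in the weak formulation of \eqref{vla1} using the continuity and momentum equations, legitimate because $\partial_x\ln\rho=\partial_x\rho/\rho\in C([0,T[;H^{s-1}(\R))$ is a genuine bounded continuous function — $\mu=\rho\,dx\otimes\delta_{\xi=\partial_x\Phi}$ is the desired measure solution of \eqref{vla1}.
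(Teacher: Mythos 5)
Your construction is correct, but it takes a genuinely different route from the paper. The paper never solves the isothermal Euler system \eqref{isen1} directly: it runs Grenier's complexified WKB method on the logarithmic Schr\"odinger equation, proving well-posedness of the system \eqref{av-eps} in $(a_1,a_2,v)$ uniformly in $\eps\in[0,1]$ with the solution-dependent symmetrizer $\mathrm{diag}\bigl(1,1,(a_1^2+a_2^2)/(4\lambda)\bigr)$ (which degenerates at vacuum, whence the need to propagate $|a^\eps|^2\ge\rho_{0*}/2$), then passes to the limit in the Wigner transform; the Euler solution is the $\eps=0$ member of that family. You instead set $q=\ln\rho$ and solve the resulting quasilinear system for $(q,v)$ with the \emph{constant} symmetrizer $\mathrm{diag}(\lambda,1)$ — essentially fleshing out the paper's own remark that \eqref{isen1} is strictly hyperbolic away from vacuum. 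Your route is more self-contained and structurally cleaner: vacuum is excluded automatically since $\rho=e^q\ge e^{-\|q(t)\|_{L^\infty}}$, and because there is no $\eps\,\d_{xx}^2$ term you genuinely only need $s\ge 2$ rather than the $s>5/2$ required by the paper's uniform-in-$\eps$ Proposition~\ref{prop:existence} (a threshold the paper's Theorem~\ref{th2} statement does not quite match). What you lose is precisely what the paper is after: the identification of $\mu$ as the semiclassical Wigner measure of \eqref{Schrod1}, and the sharper, structurally meaningful lower bound $\rho(t)\ge\rho_{0*}/(1+Ct)$ obtained in Proposition~\ref{prop:chen} via Riccati estimates on derivatives of Riemann invariants in Lagrangian coordinates — your bound $e^{-\|q(t)\|_{L^\infty}}$ with $\|q(t)\|_{L^\infty}$ dominated by an increasing Gr\"onwall majorant is cruder but does satisfy the letter of the statement. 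Both proofs end with the same verification, by integration by parts in the weak formulation, that an Euler solution with $\rho$ bounded below yields a measure solution of \eqref{vla1}. One technical caution: in your $H^{s-1}$ energy estimate the commutator term must be handled with the Zhidkov-adapted estimates of Lannes/Gallo (controlling $[\Lambda^{s-1},v]$ by $\|\d_x v\|_{H^{s-1}\cap L^\infty}$ acting on $H^{s-2}$), not the naive Kato--Ponce form, which would require $\d_{xx}^2u\in L^\infty$ and push you back to $s>5/2$; this is the same tool the paper invokes, so the gap is only one of explicitness.
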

\begin{remark}
   In \eqref{vla1}, the term $\partial _x\Big( \ln\int \mu d\xi
  \Big) \partial _\xi \mu $ can be considered in a weak sense as
  $\partial _\xi \Big( \partial _x\big( \ln\int \mu d\xi \big) \mu
  \Big) $, since $\int \mu d\xi = \rho \in C([ 0,T] ;C^1(\R ))$ and
  $\rho >0$.
\end{remark}
\begin{remark}
   Although the Cauchy problem for
the isothermal Euler \eqref{isen1} has 
global in time entropy weak solutions $(\rho ,u)\in L^\infty (\R _+\times \R
)^2$ (see \cite{Chen,Chen-LeFloch}), we cannot use them directly for our
purpose. Indeed, the 
momentum equation in  \eqref{isen1} is obtained from the kinetic
equation \eqref{vla1} by multiplying \eqref{vla1} by $\xi $ and
integrating the resulting equation with respect to $\xi $. This leads
to the product  
\begin{equation*}
\rho\hspace*{0.03in} \d _x \ln(\rho ).
\end{equation*}
Since it is not under a conservative form, it is well known that there
is no rigorous way to give a sense to this product for a general $\rho \in
L^\infty $. It is why we have recourse to regular solutions to the
isothermal Euler system, thus restricting for general initial bounded
data to local in time solutions far from vacuum. 
\end{remark}
\begin{remark}
  For $(\rho , v)$ with values in $] 0,+\infty [ \times \R $,
\eqref{isen1} is a strictly hyperbolic system. Consequently, for
$(\rho _0,v_0)\in (X^2(\R ))^2$ with $\rho _0\ge \alpha $ for some
$\alpha >0$, there are $T>0$ and \\
$(\rho , v)\in \big( C^1([ 0,T]
\times \R )\big) ^2$ solution to the Cauchy problem associated to
\eqref{isen1} and the initial datum $(\rho
_0,v_0)$. This theorem makes an extra connection with a logarithmic Schr\"odinger
equation, for which the above system corresponds to a limit system in
the semiclassical regime.
\end{remark}
\begin{remark}
  The assumption on the initial density
  $\rho_0$ is more general than merely $\rho_0\in \rho_{0*}+H^s(\R)$
  for some $s\ge 2$, since for instance, $\rho_0$ may have different
  limits as $x\to -\infty$ and $x\to +\infty$, or, even, no limit at
  all. 
\end{remark}
 
The plan of the paper is the following. Section~\ref{sec:2} recalls the main
steps of the derivation of the model for $\lambda >0$ and related mathematical
results. In Section~\ref{sec:nls-vlasov}, we show how \eqref{vla1} can
be obtained formally from a nonlinear Schr\"odinger equation through
the semiclassical limit. Section~\ref{sec:gaussian} establishes
Theorem~\ref{th1}. In Section~\ref{sec:bkw} , we prove Theorem
\ref{th2}.  Section~\ref{sec:6} adapts the proof of the boundedness
from below of the density in the isotropic case by  \cite{Ch-p} to the
isothermal case.\\  
\hspace*{0.1in}\\
\hspace*{0.1in}\\
\section{Derivation of the model and related results}
\label{sec:2}
In this section, we recall the main lines of the derivation of the model with $\lambda >0$, used for studying fusion plasmas (\cite{Ghendrih-Hauray-Nouri}). The evolution of the ions in the core of such plasmas is well described by the Vlasov equation
\begin{eqnarray*}
\d _tf+v\cdot \nabla _xf+\frac{Ze}{m_i}(-\nabla _x\Phi +v\wedge B)\cdot \nabla _vf= 0,
\end{eqnarray*}
where $f$ is the ionic distribution function depending on time, position (in the domain $\Omega $ of the plasma) and velocity (in $\R ^3$), $Ze$ and $m_i$ are the ion charge and mass respectively. The electric potential $\Phi $ and the magnetic field $B$ should be governed by the Maxwell equations. But a finite Larmor radius approximation is derived in the limit of a large and uniform external magnetic field. This leads to the following equation for the ionic distribution function $\overline{f}$ in gyro coordinates,
\begin{equation}\label{gyro}
\d _t\overline{f}+v_\parallel \d _{x_\parallel }\overline{f}-J_{\rho
  _L}^0(\d _{x_\parallel }\Phi )\d _{v_\parallel }\overline{f}-\big(
J_{\rho _L}^0\nabla _{x_\perp }\Phi \big) ^\perp \cdot \nabla
_{x_\perp }\overline{f}= 0. 
\end{equation}
Here, the index $\parallel $ (resp. $\perp $) refers to the direction parallel (resp. orthogonal) to the external magnetic field. For any vector $u= (u_i)\in \R ^3$, $u^\perp $ denotes the vector $(u_2,-u_1,0)$. The operator $J_{\rho _L}^0$ is a Bessel operator performing averages on circles of Larmor radius $\rho _L$ in planes orthogonal to the magnetic field. Since it is not used in this paper, we do not enter into more details about it. The electrons move quite more quickly than the ions, so that their density $n_e$ is given in terms of the electric potential $\Phi $ by the Maxwell-Boltzmann equation
\begin{equation}\label{max-boltz}
n_e= n_0e^{\frac{e}{T_e}(\Phi -<\Phi >)}.
\end{equation}
Here, $e$ (resp. $T_e$) is the electronic charge (resp. temperature),
and $<\Phi >$ is the average of the potential on a magnetic field
line. Due to the electroneutrality of the plasma, the Poisson equation
is replaced by the electroneutrality equation 
\begin{eqnarray*}
n_e= \rho ,
\end{eqnarray*}
where $\rho $ is the ionic density. The operator $J_{\rho _L}^0$
induces some regularity in the orthogonal direction, but none in the
parallel direction. The two-dimensional dynamics in the direction
perpendicular to the magnetic field is studied in
\cite{Hauray-Nouri}. In order to analyze the difficulty coming from
the highly singular term $J_{\rho _L}^0(\d _{x_\parallel }\Phi )\d
_{v_\parallel }\overline{f}$, we restrict to a one-dimensional spatial
setting, e.g. by considering ionic distribution functions written in
the form 
\begin{eqnarray*}
f(t,x,v)= f_\parallel (t,x_\parallel ,v_\parallel )f_\perp (\lvert v_\perp \rvert ),
\end{eqnarray*}
with 
\begin{eqnarray*}
\int _0^{+\infty }f_\perp (\lvert v_\perp \rvert )2\pi \lvert v_\perp
  \rvert d\lvert v_\perp \rvert  = 1.
\end{eqnarray*}
Then the term $f_\perp $ has no incidence in equation \eqref{gyro} and
can be factorized. The equation that $f_\parallel $ should solve is 
\begin{eqnarray*}
\partial _tf_\parallel +v_\parallel \partial _{x_\parallel} f_\parallel -\lambda \big( \partial _{x_\parallel }\ln(\rho _\parallel )\big) \partial _{v_\parallel } f_\parallel = 0,\quad t>0, (x_\parallel ,v_\parallel )\in \R ^2,
\end{eqnarray*}
i.e. the partial differential equation in \eqref{vla1} for
$f_\parallel $ (resp. $\rho _\parallel $, $x_\parallel $, $v_\parallel
$) denoted by $f$ (resp. $\rho $, $x$, $\xi $),  and $\lambda =
\frac{T_e}{e}$.\\ 
\hspace*{0.1in}\\
Mathematical results related to \eqref{vla1} have been obtained for a
system close to equilibrium, i.e. in the case where the departure of
the electric potential $\Phi $ from its average along the magnetic
lines $<\Phi >$ is small. Equation \eqref{max-boltz} simplifies into 
\begin{eqnarray*}
n_e= n_0\( 1+\frac{e}{T_e}\(\Phi -<\Phi >\)\) ,
\end{eqnarray*}
so that (\ref{vla1}) is replaced by
\begin{equation}\label{vdpe}
\partial _tf+\xi \partial _x f-\lambda \big( \partial _x\rho
\big) \partial _\xi f= 0,\quad t>0, (x,\xi )\in \R ^2. 
\end{equation}
The Cauchy problem for \eqref{vdpe} is locally well-posed either for initial
analytic data \cite{Jabin-Nouri} or in Sobolev spaces and satisfying a Penrose stability condition \cite{HKR-p}, but is ill-posed in the sense of
Hadamard for regular initial data in Sobolev spaces and arbitrarily
small time \cite{Bardos-Nouri}.

\section{From Schr\"odinger  to Vlasov  via Euler}
\label{sec:nls-vlasov}

In this section, we show how a logarithmic Schr\"odinger
equation can be formally related to  the isothermal Euler system \eqref{isen1}, in 
the semiclassical limit.
For $\eps >0$, consider the Cauchy problem for the Schr\"{o}dinger equation
\begin{equation}\label{Schrod1}
i\eps \partial _tu^\eps +\frac{\eps ^2}{2}\partial ^2_{xx}u^\eps =
\lambda \hspace*{0.02in}\ln(\lvert u^\eps \rvert ^2)u^\eps ,\quad
u^\eps (0,x)= \sqrt{\rho_0(x)}e^{i\Phi _0(x)/\eps }. 
\end{equation}
Following the idea from \cite{Grenier98}, any function $u^\eps =
a^\eps e^{i\Phi ^\eps/\eps }$,  
with $(t,x)\mapsto a^\eps (t,x)\in \mathbb{C}$ and $(t,x)\mapsto \Phi
^\eps(t,x) \in \R $ solutions to the quasilinear problem 
\begin{equation}\label{Phi-eps}
\partial _t\Phi ^\eps +\frac{(\partial _x\Phi ^\eps )^2}{2}+\lambda \hspace*{0.02in}\ln\big( \lvert a^\eps \rvert ^2\big) = 0,\quad \Phi ^\eps (0,x )= \Phi _0(x ),
\end{equation}
\begin{equation}\label{a-eps}
\partial _ta^\eps +\partial _x\Phi ^\eps \partial _xa^\eps
+\frac{a^\eps }{2}\partial ^2_{xx}\Phi ^\eps = i\frac{\eps
}{2}\partial ^2_{xx}a^\eps ,\quad a^\eps (0,x)=
\sqrt{\rho_0(x)}=:a_0(x ), 
\end{equation} 
is a solution to \eqref{Schrod1}. An important remark is that by
allowing $a^\eps$ to be complex-valued (even though its initial datum
is real-valued), one gains a degree of freedom
to dispatch terms from \eqref{Schrod1} into
\eqref{Phi-eps}-\eqref{a-eps}, and the choice introduced by Grenier is
much more robust than the Madelung transform when semiclassical limit is
considered (see \cite{CaDaSa12}).
\smallbreak

Determining $\Phi ^\eps $
solution to \eqref{Phi-eps} turns out to be equivalent to determining $v^\eps
= \partial _x\Phi ^\eps $ and $a^\eps$ solution to 
\begin{equation}\label{av-eps}
  \left\{
\begin{aligned}
  &\partial _tv^\eps +v^\eps \partial _xv^\eps +\lambda
\hspace*{0.02in}\partial _x\hspace*{0.02in}\ln\( \lvert a^\eps
\rvert ^2\)= 0,\quad v^\eps (0,x )= \Phi _0^\prime (x ),\\
&\d_t a^\eps + v^\eps \d_x a^\eps +\frac{a^\eps}{2}\d_x v^\eps =
i\frac{\eps}{2}\d_{xx}^2 a^\eps,\quad a^\eps(0,x)=a_0(x). 
  \end{aligned}
\right.
\end{equation}
Indeed, given $(v^\eps,a^\eps)$ solution to \eqref{av-eps}, we can define
$\Phi^\eps$ by
\begin{equation}\label{eq:v2phi}
  \Phi^\eps(t,x) = \Phi_0(x) -\int_0^t \(\frac{1}{2}|v^\eps(\tau,x)|^2
  +\lambda \ln \(|a^\eps(\tau,x)|^2\)\)d\tau.
\end{equation}
We check
that
\begin{equation*}
  \d_t\(\d_x \Phi^\eps - v^\eps\) = \d_x\d_t \Phi^\eps -\d_t v^\eps =0,
\end{equation*}
so that $v^\eps=\d_x \Phi^\eps$ and $\Phi ^\eps $
solves \eqref{Phi-eps}, and $a^\eps$ solves \eqref{a-eps}.
\smallbreak

Passing formally to the limit $\eps\to 0$ in \eqref{av-eps}, we get
the system
\begin{equation}
  \label{eq:limit}
  \left\{
    \begin{aligned}
      &\d_t v+v\d_x v + \lambda \d_x\ln(|a|^2)=0,\quad v_{\mid
        t=0}=\Phi_0^\prime,\\
& \d_t a +v\d_x a +\frac{a}{2}\d_x v =0,\quad a_{\mid
  t=0}=\sqrt{\rho_0},
    \end{aligned}
\right.
\end{equation} 
which turns out to be the symmetrized version of \eqref{isen1}, with
$\rho=|a|^2$ (see \cite{JYC90,MUK86}). As noticed in the introduction,
we then formally obtain a solution to \eqref{vla1} by setting
\begin{equation*}
  f(t,x,\xi) = |a(t,x)|^2dx\otimes \delta_{\xi=v(t,x)}.
\end{equation*}
A more direct link from \eqref{Schrod1} to \eqref{vla1} is provided by
the notion of Wigner measure. The Wigner transform of $u^\eps $,
solution to \eqref{schrod}--\eqref{init1}, is 
defined by (see e.g. \cite{BurqMesures,GMMP,LionsPaul})
\begin{equation}\label{df1-wig}
W^\eps (t,x,\xi )= \int_{\R}e^{iy\xi}u^\eps \(t,x-\frac{\eps
}{2}y\)\overline{u^\eps }\(t,x+\frac{\eps }{2}y\)dy,\quad (t,x,\xi
)\in [ 0,T] \times \R ^2.  
\end{equation}
Up to the extraction of a subsequence, $W^\eps$ converges to a
non-negative measure on the phase space. In general, several limits
may exist (see the above references). We will see that in the
framework of this paper, the limit is unique, and solves
\eqref{vla1}.

\section{Gaussian initial data}
\label{sec:gaussian}
Let us first study the ordinary differential equation
\eqref{eq:theta}. Local existence and 
uniqueness of a $C^1$ solution stem from Cauchy-Lipschitz Theorem.
Indeed, the nonlinearity in the above equation is locally Lipschitzean
away from $\{\gg =0\}$. We
now address the global existence issue. 
\smallbreak

In the case $\lambda>0$, multiplying \eqref{eq:theta} by $\dot \gg$ and
integrating yields
\begin{equation}\label{energy1}
  (\dot \gg)^2 = \omega_0^2 +4\lambda \si_0\ln \gg. 
\end{equation}
This readily shows that $\gg$ is bounded from below away from zero,
so the flow is global. \\
In the case $\lambda<0$, suppose that $\gg$ is bounded away from
zero, i.e. there is $\delta >0$ such that $\gg(t)\ge \delta $. Then \eqref{eq:theta} yields
\begin{equation*}
  \ddot\gg\le \frac{2\lambda \si_0}{\delta},
\end{equation*}
hence $\gg(t)\le \frac{\lambda \si_0}{\delta}t^2 +\omega_0t+1$, and a
contradiction for $t$ sufficiently large. Now suppose that $\gg\in
C^2(0,\infty)$ with $\gg>0$: from the above argument, there exists
a sequence $t_n$ along which $\gg(t_n)\to 0^+$. From
\eqref{eq:theta}, $\ddot \gg(t_n)\to -\infty$, hence a
contradiction.  
\smallbreak
To prove Theorem~\ref{th1}, we start from a semi-classically scaled logarithmic
Schr\"odinger equation,
\begin{equation}\label{schrod}
i\eps \partial _tu^\eps +\frac{\eps ^2}{2}\partial ^2_{xx}u^\eps = \lambda \hspace*{0.02in}\ln (|u^\eps |^2)u^\eps ,
\end{equation}
together with the initial value
\begin{equation}\label{init1}
  u^\eps(0,x) =\sqrt{\rho_*}e^{-\si_0 x^2/2} e^{i\omega_0x^2/(2\eps)} e^{ip_0x/\eps}.
\end{equation}
Such an initial datum does not fit into the framework of
\cite{ACIHP}, since it goes to zero at infinity. However, as noted in
\cite{BiMy76}, for
\emph{fixed} $\eps>0$, the solution to \eqref{schrod} with such an
initial datum can be computed rather explicitly. Indeed, bearing in
mind the
propagation of coherent states in the semi-classical regime
(\cite{Hag80}, see also \cite{CoRoBook})  it is
consistent to look for a solution of the form
\begin{equation}\label{eq:ugauss}
  u^\eps(t,x) = b^\eps(t)e^{-\Omega^\eps(t)(x-q(t))^2/2 +ip(t)(x-q(t))/\eps+iS(t)/\eps},
\end{equation}
with $b^\eps,\Omega^\eps\in \C$, and $q,p,S\in \R$. Plugging this ansatz
into \eqref{Schrod1}, we find:
\begin{align*}
  &i\eps \dot b^\eps -i \eps \dot \Omega^\eps \frac{(x-q)^2}{2}b^\eps
    +i\eps \Omega ^\eps \dot q (x-q)b^\eps-\dot p 
  (x-q)b^\eps +p\dot q b^\eps -\frac{p^2}{2} b^\eps-\dot S b^\eps -\eps^2
  \frac{\Omega^\eps}{2}b^\eps\\
 & +\eps^2(\Omega^\eps)^2\frac{(x-q)^2}{2}b^\eps-ip\eps \Omega^\eps (x-q)b^\eps =
  \lambda b^\eps\ln |b^\eps|^2 -\lambda b^\eps (x-q)^2\RE \Omega^\eps. 
\end{align*}
Cancelling the polynomial in $x-q$, sufficient conditions for the
previous equation to hold are 
\begin{align}
  \label{eq:hamil}
&  \dot q=p,\quad \dot p=0,\quad \dot S = p\dot q
  -\frac{p^2}{2}=\frac{p^2}{2},\\
\label{eq:b}
&i\eps \dot b^\eps = \frac{\eps^2}{2}\Omega^\eps b^\eps +\lambda b
  ^\eps\ln |b^\eps|^2,\\ 
\label{eq:Omega}
& i\eps \dot \Omega^\eps = \eps^2(\Omega^\eps)^2 +2\lambda \RE
  \Omega^\eps. 
\end{align}
Equation~\eqref{eq:hamil} corresponds to the classical Hamiltonian
flow in the absence of external force, and the associated classical
action. We compute exactly
\begin{equation*}
  p(t) = p_0,\quad q(t) = p_0 t,\quad S(t) = \frac{p_0^2}{2}t.
\end{equation*}
Since we are eventually interested only in the modulus of $b^\eps$, we
infer from \eqref{eq:b}:
\begin{equation*}
  \frac{d}{dt}|b^\eps|^2=2\RE \bar b^\eps \dot b^\eps = \eps
  |b^\eps|^2\IM \Omega^\eps,
\end{equation*}
hence
\begin{equation}\label{modulus-b_eps}
  |b^\eps(t)|^2 =  \rho _*e^{\eps \int_0^t \IM \Omega^\eps(s)ds}.
\end{equation}
For
fixed $\eps>0$ and $\RE\Omega^\eps(0)>0$,  \eqref{eq:Omega} has a
unique, global solution 
$\Omega^\eps \in C(\R)$, whose large time behavior depends on the sign
of $\lambda$. Indeed, in a similar fashion as in
\cite{LiWa06}, we seek $\Omega^\eps$ of the form
\begin{equation*}
  \Omega^\eps = -i\frac{\dot \eta^\eps}{\eps\eta^\eps}.
\end{equation*}
Then \eqref{eq:Omega} becomes
\begin{equation*}
  \ddot \eta^\eps = \frac{2\lambda}{\eps}\eta^\eps\IM\frac{\dot
      \eta^\eps}{\eta^\eps}.  
\end{equation*}
Introducing the scaled polar decomposition $\eta^\eps=\gg^\eps
e^{i\eps\alpha^\eps}$, $\Omega^\eps$ is given by
\begin{equation}\label{eq:Omegaralpha}
  \Omega^\eps = \dot \alpha^\eps -\frac{i}{\eps}\frac{\dot \gg^\eps}{\gg^\eps},
\end{equation}
and the 
above equation reads:
\begin{equation*}
   \ddot \gg^\eps - \eps^2 \gg^\eps(\dot \alpha^\eps)^2 =
    2\lambda \gg^\eps \dot \alpha^\eps,\qquad
\ddot \alpha^\eps \gg^\eps +2\dot \alpha^\eps \dot \gg^\eps=0.
 \end{equation*}
The second equation yields
\begin{equation*}
 \frac{d}{dt}\( (\gg^\eps)^2\dot \alpha^\eps\) = 0.
\end{equation*}
In view of \eqref{eq:Omegaralpha}, we have a degree of freedom to set
$\gg^\eps(0)$. Setting $\gg^\eps(0)=1$, we have
\begin{equation*}
  \dot \alpha^\eps(0)= \RE\Omega^\eps(0)=\si_0,\quad \dot
  \gg^\eps(0)=-\eps\IM \Omega^\eps(0)= \omega_0. 
\end{equation*}
Therefore, we have
\begin{equation}\label{eq:rdotalpha}
  (\gg^\eps)^2\dot \alpha^\eps = \si_0.
\end{equation}
The  equation for $\gg^\eps$ boils down to 
\begin{equation*}
  \ddot \gg^\eps=  \frac{\eps^2\si_0^2}{(\gg^\eps)^3} +
  \frac{2\lambda \si_0}{\gg^\eps} , 
\end{equation*}
along with the initial data that we recall:
\begin{equation*}
  \gg^\eps(0)=1,\quad \dot  \gg^\eps(0)=\omega_0. 
\end{equation*}
 Local existence and
uniqueness of a $C^1$ solution stem from Cauchy-Lipschitz Theorem.
Indeed, the nonlinearity in the above equation is locally Lipschitzean
away from $\{\gg^\eps=0\}$. We
now address the global existence issue. 
Multiplying the above equation by $\dot \gg^\eps$ and integrating in
time, we infer:
\begin{equation*}
  (\dot \gg^\eps)^2 = 4\lambda \si_0 \ln \gg^\eps
  -\frac{\eps^2\si_0^2}{(\gg^\eps)^2}+ \eps^2\si_0^2+\omega_0^2. 
\end{equation*}
This shows that for fixed $\eps>0$, $\gg^\eps$ remains bounded away from
zero, for if we had $\gg^\eps(t_n)\to 0$ for some sequence $t_n$, then
the above right hand side would become negative for $n$ large enough,
hence a contradiction. Thus, for fixed $\eps>0$,  \eqref{eq:Omega} has
a unique, global solution $\Omega^\eps \in C^1(\R)$. \\
In view of \eqref{eq:rdotalpha}, \eqref{eq:Omegaralpha} also reads
\begin{equation}\label{Omega_eps}
  \Omega^\eps = \frac{\si_0}{(\gg^\eps)^2}-\frac{i}{\eps}\frac{\dot \gg^\eps}{\gg^\eps}.
\end{equation}
\smallbreak

Given \eqref{modulus-b_eps} and \eqref{Omega_eps}, the Wigner
transform \eqref{df1-wig} of $u^\eps$ is given by
\begin{equation}\label{df2-wig}
W^\eps (t,x,\xi )= \frac{\rho _*}{\gamma ^{\eps }(t)}e^{-\frac{\sigma _0(x-p_0t)^2}{(\gamma ^\eps (t))^2}}\int e^{-\frac{\sigma _0\eps ^2y}{4(\gamma ^\eps (t))^2}}e^{iy(\xi-\frac{\dot \gg^\eps}{\gg^\eps}(x-p_0t)-p_0)}dy.
\end{equation}
On every time interval such that $\gamma $ is bounded away from zero, the Gronwall lemma shows that
\begin{eqnarray*}
\gamma ^\eps -\gamma = \O(\eps),\quad \dot\gg^\eps-\dot\gg=\O(\eps).
\end{eqnarray*}
Consequently, when $\eps \to 0$, the Wigner transform $W^\eps $ of $u^\eps $ weakly
converges  to the bounded measure  
\begin{equation}\label{df-mu}
\mu (t,dx,d\xi)= \frac{\rho _*}{\gamma (t)}e^{-\frac{\sigma _0(x-p_0t)^2}{(\gamma (t))^2}}dx\otimes \delta _{\xi= \frac{\dot \gg}{\gg}(x-p_0t)+p_0}.
\end{equation}
Straightforward computations show that
\begin{eqnarray*}
(\rho (t,x), v(t,x)):= \Big( \frac{\rho _*}{\gamma (t)}e^{-\frac{\sigma _0(x-p_0t)^2}{(\gamma (t))^2}}, \frac{\dot \gg (t)}{\gg (t)}(x-p_0t)+p_0\Big) 
\end{eqnarray*}
is a solution to the isothermal Euler system (\ref{isen1}) on the time interval of the existence of $\gamma $.
\smallbreak
To conclude, we consider the large time behavior of $\gg$ for $\lambda>0$. If $\gg$ was bounded from above, $\gg\le
M$, then \eqref{eq:theta} would yield
\begin{equation*}
  \ddot \gg \ge \frac{2\lambda \si_0}{M}>0,
\end{equation*}
hence a contradiction. Therefore, for $\underline t$ sufficiently
large, $\gg(\underline t)\ge 1$ and $\dot \gg(\underline
t)>0$. Since $\ddot \gg\ge 0$, integration then shows
\begin{equation*}
  \gg(t)\ge \dot \gg(\underline t)(t-\underline t) +1\Tend t
  \infty \infty,
\end{equation*}
and $\dot \gg>0$ for $t$ sufficiently large. 
The asymptotic behavior announced in Theorem~\ref{th2} then follows by
integrating the identity
\begin{equation*}
  \frac{d\gg}{\sqrt{\omega_0^2 +4\lambda \si_0\ln \gg}}=dt,
\end{equation*}
to obtain the asymptotic behavior of $\gg$. 
\begin{equation*}
  t = \int \frac{d\gg}{\sqrt{\omega_0^2 +4\lambda \si_0\ln \gg}} =
  \frac{1}{2\lambda\si_0}\int e^{(y^2-\omega_0^2)/(4\lambda\si_0)}dy, 
\end{equation*}
where we have changed the variable as
\begin{equation*}
  y = \sqrt{4\lambda  \si_0\ln \gg +\omega_0^2}.
\end{equation*}
Recall that the Dawson function, defined by
\begin{equation*}
  F(x) =e^{-x^2}\int_0^x e^{y^2}dy
\end{equation*}
satisfies
\begin{equation*}
  F(x)\Eq x {+\infty} \frac{1}{2x}, 
\end{equation*}
(see e.g. \cite{AbSt64}), we infer that
\begin{equation*}
  \gg(t) \Eq t {+\infty} 2t\sqrt{\lambda\si_0\ln t}.
\end{equation*}
Since
\begin{equation*}
  \dot \gg = \sqrt{\omega_0^2 +4\lambda \si_0\ln \gg},
\end{equation*}
we conclude
\begin{equation*}
   \dot \gg(t)\Eq t {+\infty} 2\sqrt{\lambda \si_0\ln t}.
\end{equation*}
In the case $\lambda<0$, suppose that $\gg$ is bounded away from
zero, $\gg(t)\ge \delta>0$. Then \eqref{eq:theta} yields
\begin{equation*}
  \ddot\gg\le \frac{2\lambda \si_0}{\delta},
\end{equation*}
hence $\gg(t)\le \frac{\lambda \si_0}{\delta}t^2 +\omega_0t+1$, and a
contradiction for $t$ sufficiently large. Now suppose that $\gg\in
C^2(0,\infty)$ with $\gg>0$: from the above argument, there exists
a sequence $t_n$ along which $\gg(t_n)\to 0^+$. From
\eqref{eq:theta}, $\ddot \gg(t_n)\to -\infty$, hence a
contradiction.  
\smallbreak

This ends the proof of Theorem \ref{th1}.

\section{WKB analysis}
\label{sec:bkw}

In this section, we justify the formal approach presented in
Section~\ref{sec:nls-vlasov} in the framework of Theorem~\ref{th2},
thus proving this result.

\subsection{Constructing the solution}
\label{sec:constr-solut}

Note that for fixed $\eps>0$, the Cauchy problem for \eqref{Schrod1}
has been considered in \cite{CaHa80} (see also
\cite[Section~9.1]{CazCourant}), for initial data in the class
\begin{equation*}
  W=\left\{ f\in H^1(\R),\quad \int_\R |f(x)|^2 \left| \ln
      |f(x)|^2\right|dx<\infty\right\}. 
\end{equation*}
This class is not compatible with the assumption $\rho(x)\ge
\rho_{0*}>0$ from Theorem~\ref{th1}, which is equivalent to 
$|u^\eps(0,x)|^2=|a_0(x)|^2\ge \rho_{0*}>0$ in the approach that we
follow. Therefore, we choose to rather work in Zhidkov spaces
$X^s(\R)$.
The system \eqref{av-eps} has a unique smooth solution as
stated in the following proposition, which includes the case
$\eps=0$. 
\begin{proposition}\label{prop:existence}
  Let $s>5/2$ and $\lambda>0$. Suppose that $\rho_0,\Phi_0^\prime\in
  X^s(\R)$, with 
  \begin{eqnarray*}
\rho_0(x)\ge \rho_{0*}>0.
\end{eqnarray*}
Then there 
  exists $T$ independent of $s>5/2$ and $\eps\in [0,1]$, and a
  unique solution \\
  $(a^\eps,v^\eps)\in C([0,T];X^s\times X^s)$ to
  \eqref{av-eps}. 
\end{proposition}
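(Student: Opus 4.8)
The plan is to follow Grenier's strategy \cite{Grenier98} --- view \eqref{av-eps} as a quasilinear symmetric hyperbolic system perturbed by a skew-symmetric semiclassical term --- but to carry it out in the Zhidkov spaces $X^s$ rather than in $H^s(\R)$, adapting the functional calculus of \cite{Gallo}. Keeping $a^\eps$ complex-valued, and using $\lambda\,\partial_x\ln|a^\eps|^2=\tfrac{2\lambda}{|a^\eps|^2}\RE(\overline{a^\eps}\,\partial_x a^\eps)$, the system \eqref{av-eps} reads, on the region $\{|a^\eps|^2>0\}$,
\begin{equation*}
  \partial_t v^\eps + v^\eps\partial_x v^\eps + \frac{2\lambda}{|a^\eps|^2}\RE\bigl(\overline{a^\eps}\,\partial_x a^\eps\bigr)=0,\qquad
  \partial_t a^\eps + v^\eps\partial_x a^\eps + \frac{a^\eps}{2}\,\partial_x v^\eps = i\frac{\eps}{2}\,\partial_x^2 a^\eps .
\end{equation*}
The structural point, which is where $\lambda>0$ is used, is that this system is Friedrichs-symmetrizable with the symmetrizer that assigns the \emph{variable} weight $|a^\eps|^2/(4\lambda)$ to $v^\eps$ and the \emph{constant} weight $1$ to $a^\eps$: with this choice the two top-order coupling terms $\partial_x^k v^\eps\,\RE(\overline{a^\eps}\,\partial_x^{k+1}a^\eps)$ produced by the two equations cancel after one integration by parts, while the term $i\tfrac{\eps}{2}\partial_x^2a^\eps$, which involves only $a^\eps$ and commutes with a constant weight, contributes \emph{exactly zero} to the energy identity at every order (it is skew-adjoint on $L^2$). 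This is what makes all estimates uniform in $\eps\in[0,1]$, and it works identically for $\eps=0$, where the $a^\eps$-equation is a transport equation.

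Concretely I would run the iteration $u^{(0)}=(\Phi_0',a_0)$, with $u^{(n+1)}=(v^{(n+1)},a^{(n+1)})$ solving the linear system obtained by freezing the coefficients of the above system at $u^{(n)}$, with initial datum $(\Phi_0',a_0)$. For each fixed $\eps$ this is a linear symmetric hyperbolic equation for $v^{(n+1)}$ coupled to a linear Schr\"odinger equation for $a^{(n+1)}$ with $X^s$ coefficients, hence has a solution $u^{(n+1)}\in C([0,T_n];X^s\times X^s)$ by standard linear theory. The heart of the matter is the uniform a priori estimate: applying $\partial_x^k$ for $1\le k\le s$ (or $\Lambda^{s-1}\partial_x$ when $s$ is not an integer) to the two equations, pairing respectively with $\tfrac{|a^\eps|^2}{2\lambda}\partial_x^k v^\eps$ and $2\,\overline{\partial_x^k a^\eps}$, summing, and using the cancellation and skew-symmetry above together with Kato--Ponce commutator and Moser composition estimates in the Zhidkov spaces (here $s>5/2$ guarantees $H^{s-1}(\R)$ is an algebra embedded in $W^{1,\infty}$, and $|a^\eps|^2$ stays in a fixed compact subset of $(0,\infty)$ so that $u\mapsto|a|^2/(4\lambda)$ and $\ln|a|^2$ are smooth of $u$), one bounds $\|\partial_x u^\eps\|_{H^{s-1}}$ by a quantity depending only on itself, on $\|u^\eps\|_{L^\infty}$, and on a lower bound for $|a^\eps|^2$. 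The $L^\infty$ norm of $v^\eps$ is propagated along the characteristics $\dot X=v^\eps(t,X)$ of the unperturbed transport part; for $a^\eps$ the same characteristics give $\tfrac{d}{dt}a^\eps(t,X(t))=-\tfrac12 a^\eps\partial_x v^\eps+i\tfrac{\eps}{2}\partial_x^2 a^\eps$, and since $\eps\le1$ and $s>5/2$ one has $\|\eps\,\partial_x^2 a^\eps\|_{L^\infty}\lesssim\|\partial_x a^\eps\|_{H^{s-1}}$, so $\|a^\eps\|_{L^\infty}$, and by the analogous lower estimate the lower bound $|a^\eps|^2\ge\rho_{0*}/2$, are controlled on a time interval that depends only on the data (clean for $\eps=0$; for $\eps>0$ one possibly shrinks this interval, but it still depends only on $\rho_{0*},\lambda$ and a low $X$-norm of the data). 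A continuity/bootstrap argument then fixes $T>0$ depending only on $\lambda$, $\rho_{0*}$, and $\|\rho_0\|_{X^{s_0}}+\|\Phi_0'\|_{X^{s_0}}$ for a fixed $s_0\in(5/2,s]$, on which the $u^{(n)}$ are bounded in $C([0,T];X^s\times X^s)$ uniformly in $n$ and $\eps$; the choice of $s_0$ independent of $s$, together with persistence of higher regularity, yields the claimed $s$-independence of $T$.

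Finally, the iterates are Cauchy in the lower-order norm $C([0,T];L^2\times L^2)$ --- the difference $u^{(n+1)}-u^{(n)}$ solves a linear system with $X^s$ coefficients and $L^2$ data, the semiclassical term again dropping out of the $L^2$ estimate --- so they converge to a limit $(a^\eps,v^\eps)$; the uniform $X^s$ bounds plus interpolation (or a Bona--Smith regularisation) upgrade the convergence to $C([0,T];X^s\times X^s)$, and continuity in time is recovered in the usual way. Uniqueness follows from the same $L^2$-stability estimate applied to two solutions with the same datum (using finite propagation speed when $\eps=0$, a weighted variant when $\eps>0$). Note that on $[0,T]$ one has $|a^\eps|^2\ge\rho_{0*}/2$, so $\partial_x\ln|a^\eps|^2=\partial_x(|a^\eps|^2)/|a^\eps|^2\in C([0,T];X^{s-1})$ and the system makes classical sense. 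The main obstacle is precisely the uniform-in-$\eps$ a priori estimate: one must combine the Grenier symmetrization in the sharp form that isolates the semiclassical term on a constant-weight block so that it disappears from the energy identity, the Zhidkov-space functional calculus, and the simultaneous propagation of the $L^\infty$ bound on $a^\eps$ (delicate because of the second-order term $\eps\,\partial_x^2a^\eps$, which is why $s>5/2$ is imposed) and of the positive lower bound on $|a^\eps|^2$ on which both the symmetrizer and the logarithm depend.
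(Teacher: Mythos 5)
Your proposal is correct and follows essentially the same route as the paper: Grenier's reformulation of \eqref{av-eps} as a quasilinear hyperbolic system with the symmetrizer weighting $v^\eps$ by $|a^\eps|^2/(4\lambda)$ (hence the need for $\lambda>0$ and for the non-vacuum bootstrap), the observation that the semiclassical term $i\frac{\eps}{2}\partial_{xx}^2 a^\eps$ is skew-adjoint on the constant-weight block and so vanishes from the energy identity, tame/commutator estimates in the Zhidkov spaces with $s>5/2$ needed to put $\partial_{xx}^2 a^\eps$ in $L^\infty$, and a continuation argument giving $T$ independent of $s$ and $\eps$. The only differences are presentational (you keep $a^\eps$ complex where the paper splits real and imaginary parts into a $3\times3$ real system, and you propagate the $L^\infty$ bound along characteristics where the paper integrates the equation directly), so no further comment is needed.
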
 
\begin{proof}
  This result is a rather direct consequence of
  \cite[Proposition~2.1]{ACIHP}, whose proof we recall the main
  idea. Separate real and imaginary parts of $a^\eps$,
  $a^\eps=a_1^\eps+ia_2^\eps$, and introduce
\begin{equation*}
  \bu^\eps = 
    \begin{pmatrix}
       a_1^\eps \\
       a_2^\eps \\
       v^\eps
    \end{pmatrix}
 , \quad
\bu_0 = 
    \begin{pmatrix}
       \sqrt{\rho_0}\\
      0 \\
       \Phi_0^\prime 
    \end{pmatrix}
\ , \quad
L = 
  \begin{pmatrix}
   0  &-\d_{xx}^2 &0   \\
   \d_{xx}^2  & 0 &0  \\
   0& 0 &0 \\
   \end{pmatrix}
\end{equation*}
\begin{equation*}
\text{and}\quad A(\bu)
= \begin{pmatrix}
      v  & 0& \frac{a_1 }{2} \\
     0 &  v & \frac{a_2}{2} \\
     \frac{2\lambda a_1}{a_1^2+a_2^2}
     &\frac{2\lambda a_2}{a_1^2+a_2^2} &  v
    \end{pmatrix}.
\end{equation*}
We now have the system:
\begin{equation}
  \label{eq:systhypgeneral}
\partial_t \bu^\eps +
  A(\bu^\eps)\partial_x \bu^\eps = \frac{\eps}{2} L
  \bu^\eps\quad ;\quad
 \bu^\eps_{\mid t=0}=\bu_0.
\end{equation}
Since $\rho_0$ is bounded away from zero, its square root is also in
$X^s$, so that $\bu_0\in X^s(\R)^3$. 
The matrix $A$ is symmetrized by the matrix
\begin{equation*}
  S=\left(
    \begin{array}[l]{cc}
     I_2 & 0\\
     0& \frac{a_1^2+a_2^2}{4\lambda}
    \end{array}
\right),
\end{equation*}
which is symmetric positive if and only if $a_1^2+a_2^2>0$, that is,
so long as no vacuum appears. By assumption, 
\begin{equation*}
  (a_1^\eps)^2+(a_2^\eps)^2\Big|_{t=0}\ge \rho_{0*}>0.
\end{equation*}
Then the main idea is that the operator $L$ is skew-symmetric, and so
does not appear in $L^2$-based energy estimates. Standard tame estimates (see
e.g. \cite{Majda,Taylor3}) do not involve the $L^2$ norm of
$\bu^\eps$, and so the only aspect remaining is that
$L^\infty$-estimates can be obtained rather directly. So long as, say, 
\begin{equation}
  \label{eq:solong}
  (a_1^\eps(t,x))^2+(a_2^\eps(t,x))^2 \ge \frac{\rho_{0*}}{2},\quad \forall
  x\in \R,
\end{equation}
we have:
\begin{align*}
  \|\bu^\eps(t)\|_{L^\infty} &\le \|\bu_0\|_{L^\infty} + \int_0^t
  \|A(\bu^\eps(\tau))\d_x \bu^\eps(\tau)\|_{L^\infty}d\tau + \int_0^t\|\d_{xx}^2
                               a^\eps(\tau)\|_{L^\infty}d\tau\\
&\le \|\bu_0\|_{L^\infty} + C\int_0^t
  \|\bu^\eps(\tau)\|_{L^\infty}\|\d_x \bu^\eps(\tau)\|_{L^\infty}d\tau
  + \int_0^t\|\d_{xx}^2 
                               a^\eps(\tau)\|_{L^\infty}d\tau\\
&\le \|\bu_0\|_{L^\infty} + C\int_0^t
  \|\bu^\eps(\tau)\|_{X^s}^2d\tau + \int_0^t\|\bu^\eps(\tau)\|_{X^s}d\tau,
\end{align*}
where we have used Sobolev embedding under the assumption
$s>5/2$. Indeed, by definition, $X^s\subset
  L^\infty$, and for $\bu^\eps\in X^s$, $\d_x \bu^\eps\in
  H^{s-1}\subset L^\infty$, provided that $s>3/2$, and similarly, $\d_{xx}^2
  \bu^\eps\in L^\infty$ for $s>5/2$. \\ 
Now we set $P=(I-\d_{xx}^2)^{(s-1)/2}\d_x$, so that $\|f\|_{X^s}\approx
\|f\|_{L^\infty}+\|Pf\|_{L^2}$, and denote by
\begin{equation*}
  \<f,g\>=\int_{-\infty}^\infty f(x)\overline{g(x)}dx,
\end{equation*}
the scalar product in $L^2$. Since $L$ is skew-symmetric and  $S$ is real-valued,
\[ \begin{aligned}
  &\frac{d}{dt}\<SP\bu^\eps(t),P\bu^\eps(t)\>\\
  &= \<(\d_t S)
  P\bu^\eps(t),P\bu^\eps(t)\>+2\RE\<S\d_t
  P\bu^\eps(t),P\bu^\eps(t)\> \\
 &= \< (\d_t S)
  P\bu^\eps(t),P\bu^\eps(t)\>+\eps \RE\<SLP\bu^\eps(t),P\bu^\eps(t)\>
  \\
&\quad -2\RE\<SP\Big( A(u^\eps (t))\d _xu^\eps (t)\Big) ,P\bu^\eps(t)\> .
\end{aligned}\]
So long as \eqref{eq:solong} holds,
we have the following set of estimates. First,
\begin{align*}
  \< (\d_t S )P\bu^\eps(t),P\bu^\eps(t)\>&\le \|\d_t
  S\|_{L^\infty}\|P\bu^\eps(t)\|^2_{L^2}\\
&\le C\(
  \|\bu^\eps(t)\|_{L^\infty}\)\|\d_t \bu^\eps(t)\|_{L^\infty}
  \|\bu^\eps(t)\|^2_{X^s}.
\end{align*}
Directly from \eqref{eq:systhypgeneral}, we have:
\begin{align*}
  \|\d_t \bu^\eps(t)\|_{L^\infty} &\le C \(
  \|\bu^\eps(t)\|_{L^\infty}\) \|\d_x \bu^\eps(t)\|_{L^\infty} +
  \|\d_{xx}^2 a^\eps(t)\|_{L^\infty}\\
& \le C \(
  \|\bu^\eps(t)\|_{X^s}\) \| \bu^\eps(t)\|_{X^s}.
\end{align*}
Since $SL$ is skew-symmetric, we have
\begin{equation*}
  \RE\< SL P \bu^\eps(t),P\bu^\eps(t)\>=0,
\end{equation*}
which prevents any loss of regularity in the estimates. For the
quasi-linear term involving the matrix $A$, we note that since
$SA$ is symmetric, commutator estimates (see \cite{LannesJFA}) yield:
\begin{align*}
   \< S  P
  \( A(\bu^\eps)\partial_x \bu^\eps\),P\bu^\eps(t)\> &\le C\( \|
  \bu^\eps(t) \|_{L^\infty}\) \|P\bu^\eps(t)\|_{L^2}^2
  \|\d_x\bu^\eps(t)\|_{L^\infty}\\
&\le C\( \|
  \bu^\eps(t) \|_{X^s }\)\|P\bu^\eps(t)\|_{L^2}^2.
\end{align*}
Finally, we have:
\begin{equation*}
  \frac{d}{dt} \<SP\bu^\eps(t),P\bu^\eps(t)\>\le C\(
  \|\bu^\eps(t)\|_{X^s}\) \|\bu^\eps(t)\|_{X^s}^2.
\end{equation*}
This estimate, along with the $L^\infty$-estimate, shows that on a
sufficiently small time interval $[0,T]$, with $T>0$ independent
of $\eps \in [0,1]$, \eqref{eq:solong} holds, hence the existence of a
unique solution. 
The fact that the local existence time does not depend on $s>5/2$
follows from the continuation principle based on Moser's calculus and
tame estimates (see e.g. \cite[Section~2.2]{Majda} or
\cite[Section~16.1]{Taylor3}).
\end{proof}

\begin{corollary}
  Under the assumptions of Proposition~\ref{prop:existence}, if we suppose
  in addition that $\Phi_0^\prime\in L^2(\R)$, then
  \eqref{Schrod1} has a unique solution $u^\eps \in
  C([0,T];X^s(\R))$, where $T$ is given by Proposition~\ref{prop:existence}.
\end{corollary}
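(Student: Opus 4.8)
The plan is to reconstruct the solution of \eqref{Schrod1} on $[0,T]$ from the solution $(a^\eps,v^\eps)\in C([0,T];X^s\times X^s)$ of \eqref{av-eps} provided by Proposition~\ref{prop:existence}, via the change of unknowns of Section~\ref{sec:nls-vlasov}, then to improve the regularity and prove uniqueness. First I would define $\Phi^\eps$ by the formula \eqref{eq:v2phi}; this is meaningful on $[0,T]$ because there $|a^\eps|^2\ge\rho_{0*}/2$ by \eqref{eq:solong}, while $v^\eps,a^\eps\in C([0,T];X^s)\hookrightarrow C([0,T]\times\R)$, so the integrand $\tfrac12|v^\eps|^2+\lambda\ln(|a^\eps|^2)$ is continuous and bounded (and belongs to $C([0,T];X^s)$, since $X^s$ is an algebra and $|a^\eps|^2$ stays away from zero). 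The computation recalled just after \eqref{eq:v2phi} then gives $\d_x\Phi^\eps=v^\eps$, that $\Phi^\eps$ solves \eqref{Phi-eps} and $a^\eps$ solves \eqref{a-eps}, so that $u^\eps:=a^\eps e^{i\Phi^\eps/\eps}$ solves \eqref{Schrod1} with initial datum $\sqrt{\rho_0}\,e^{i\Phi_0/\eps}$; all manipulations are licit since $s>5/2$ provides two continuous spatial derivatives.

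Next I would use the extra hypothesis $\Phi_0'\in L^2(\R)$. The first line of \eqref{av-eps} is a forced Burgers equation for $v^\eps$ whose source $-\lambda\,\d_x\ln(|a^\eps|^2)=-\lambda\,\d_x(|a^\eps|^2)/|a^\eps|^2$ lies in $C([0,T];L^2\cap L^\infty)$, since $\d_x a^\eps\in H^{s-1}$ and $|a^\eps|^2\ge\rho_{0*}/2$. A standard $L^2$ energy estimate (performed on smooth approximations of the data and passed to the limit, with constants depending only on $\|\Phi_0'\|_{L^2}$, the norms of Proposition~\ref{prop:existence} and $T$) then yields $v^\eps\in C([0,T];L^2)$, and together with $\d_x v^\eps\in C([0,T];H^{s-1})$ this gives $v^\eps\in C([0,T];H^s)$. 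Now $u^\eps=a^\eps e^{i\Phi^\eps/\eps}$ is a product of two elements of the algebra $X^s$, so it suffices to check $e^{i\Phi^\eps/\eps}\in C([0,T];X^s)$, which I would obtain by a bootstrap in the regularity index: $e^{i\Phi^\eps/\eps}\in L^\infty$ and $\d_x(e^{i\Phi^\eps/\eps})=\tfrac{i}{\eps}v^\eps e^{i\Phi^\eps/\eps}\in L^2$ because $v^\eps\in L^2$, so $e^{i\Phi^\eps/\eps}\in X^1$; and if $e^{i\Phi^\eps/\eps}\in X^\sigma$ with $1\le\sigma$ and $\sigma\le s$, then $v^\eps e^{i\Phi^\eps/\eps}\in H^\sigma$ by the product rules in Zhidkov spaces (see \cite{Gallo}), hence $e^{i\Phi^\eps/\eps}\in X^{\sigma+1}$; thus $X^s$ (indeed $X^{\lceil s\rceil}\subseteq X^s$) is reached after finitely many steps. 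Hence $u^\eps\in C([0,T];X^s)$.

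For uniqueness, let $\tilde u^\eps\in C([0,T];X^s)$ be another solution of \eqref{Schrod1} with the same datum, and set $w=u^\eps-\tilde u^\eps$, so that $w(0)\equiv0$, $w\in C([0,T];L^\infty)$ and $\d_x w\in C([0,T];H^{s-1})\subset C([0,T];L^2)$ (but a priori $w\notin L^2$). I would test the equation for $w$ against $\chi_R\bar w$, where $0\le\chi_R\le1$ equals $1$ on $[-R,R]$, is supported in $[-2R,2R]$ and obeys $\|\chi_R'\|_{L^\infty}\le C/R$. After integrating by parts and taking imaginary parts, the Laplacian term leaves only a remainder bounded by $\tfrac{C\eps}{\sqrt R}\|\d_x w(t)\|_{L^2}\|w(t)\|_{L^\infty}$, which tends to $0$ as $R\to\infty$; the logarithmic term is estimated by the elementary inequality $|\IM[(z_1\ln|z_1|^2-z_2\ln|z_2|^2)\overline{z_1-z_2}]|\le C|z_1-z_2|^2$ (see \cite{CaHa80}), contributing at most $\tfrac{C|\lambda|}{\eps}\int\chi_R|w|^2$. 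Thus $\tfrac{d}{dt}\int\chi_R|w|^2\le\tfrac{C|\lambda|}{\eps}\int\chi_R|w|^2+\tfrac{C'}{\sqrt R}$; Gronwall with $\int\chi_R|w(0)|^2=0$, followed by $R\to\infty$ and monotone convergence, forces $w\equiv0$ on $[0,T]$. (Alternatively, one may invoke that the constructed $u^\eps$ satisfies $|u^\eps|\ge\sqrt{\rho_{0*}/2}$ on $[0,T]$ by \eqref{eq:solong} and, by continuity, so does $\tilde u^\eps$ on a short interval, which replaces the above inequality by the Lipschitz bound on $z\mapsto z\ln|z|^2$ away from $0$, at the price of a continuation argument.)

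The step I expect to be the main obstacle is uniqueness: since the solutions live in $X^s$ and not in $L^2$, the $L^2$ difference estimate is not directly available and must be localized in space — the point being that the resulting commutator vanishes in the limit thanks to $w\in L^\infty$ and $\d_x w\in L^2$ — and one must handle the non-Lipschitz logarithmic nonlinearity (either via the Cazenave–Haraux-type inequality, or via the positive lower bound on $|u^\eps|$ from \eqref{eq:solong} together with continuation). The rest is the Grenier change of unknowns already prepared in Section~\ref{sec:nls-vlasov} combined with the Zhidkov product rules, the only genuinely new input being the propagation of the $L^2$ bound on $v^\eps$, which is precisely what the hypothesis $\Phi_0'\in L^2$ is there to allow.
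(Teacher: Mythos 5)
Your proposal follows essentially the same route as the paper: reconstruct $u^\eps=a^\eps e^{i\Phi^\eps/\eps}$ from Proposition~\ref{prop:existence} via \eqref{eq:v2phi}, use the extra hypothesis $\Phi_0^\prime\in L^2(\R)$ to propagate an $L^2$ bound on $v^\eps$ (hence placing $e^{i\Phi^\eps/\eps}$ in $X^s$), and prove uniqueness by a spatially localized $L^2$ estimate combined with the pointwise inequality of \cite{CaHa80} for the logarithmic nonlinearity. The only cosmetic difference is that you localize with a smooth cutoff $\chi_R$ whereas the paper integrates over an interval $[M_-,M_+]$ and sends well-chosen endpoints to infinity; both devices address in the same way the fact that $w^\eps$ is not a priori in $L^2$.
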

\begin{proof}
From  Proposition~\ref{prop:existence}, \eqref{av-eps}
has a solution $(v^\eps,a^\eps)\in C([0,T];X^s\times X^s)$. Plugging
this information into \eqref{av-eps}, we infer
\begin{align*}
  \|v^\eps(t)\|_{L^2}&\le \|\Phi_0^\prime\|_{L^2} + \int_0^t
  \|v^\eps(\tau)\|_{L^\infty}\|\d_x v^\eps(\tau)\|_{L^2}d\tau \\
&\quad + C
  \int_0^t \left\|\frac{1}{a^\eps(\tau)}\right\|_{L^\infty}\|\d_x a^\eps(\tau)\|_{L^2}d\tau.
\end{align*} 
 Therefore, $v^\eps\in C([0,T];L^2)$, and  $\Phi^\eps$, stemming from
$v^\eps$ \emph{via} the formula \eqref{eq:v2phi}, satisfies $\Phi^\eps
\in C([0,T];X^{s+1})$. 
  The existence part follows readily, since $X^s(\R)$ is an algebra, by
  setting $u^\eps = a^\eps e^{i\Phi^\eps/\eps}$. 

For the uniqueness property, consider two such solutions $u^\eps,\tilde
u^\eps\in C([0,T];X^s)$, and set $w^\eps= u^\eps-\tilde u^\eps$. It
satisfies
\begin{equation}\label{eq:w}
  i\eps\d_t w^\eps +\frac{\eps^2}{2}\d_{xx}^2w^\eps = \lambda\(
    \ln(|u^\eps|^2)u^\eps -   \ln(|\tilde u^\eps|^2)\tilde
    u^\eps\),\quad w^\eps_{\mid t=0}=0. 
\end{equation}
  Recall the pointwise estimate from \cite{CaHa80} (see also
  \cite[Lemma~9.3.5]{CazCourant}),
  \begin{equation*}
    \left|\IM \(
    \ln(|u^\eps|^2)u^\eps -   \ln(|\tilde u^\eps|^2)\tilde
    u^\eps\)(u^\eps-\tilde u^\eps)\right|\le 4 |u^\eps-\tilde
  u^\eps|^2. 
  \end{equation*}
Multiply \eqref{eq:w} by $\overline{w^\eps}$, integrate on an interval
$I=[M_-,M_+]$, and take the imaginary part. This yields, along with
the above estimate,
\begin{equation*}
  \frac{\eps}{2}\frac{d}{dt}\int_I |w^\eps(t,x)|^2dx +
  \frac{\eps^2}{2}\IM\int_I \overline{w^\eps}\d_{xx}^2 w^\eps \le
  4\lambda \int_I |w^\eps(t,x)|^2dx.
\end{equation*}
We have, by integration by parts,
\begin{equation*}
  \IM\int_I \overline{w^\eps}\d_{xx}^2 w^\eps =
  \IM\overline{w^\eps}(t,M_+)\d_x w^\eps(t,M_+)-
  \IM\overline{w^\eps}(t,M_-)\d_x w^\eps(t,M_-). 
\end{equation*}
Since $w^\eps \in C([0,T];X^1)$, we can choose sequences $M_\pm^n\to
\pm\infty$ along which the above term goes to zero, and the Gronwall lemma
implies $\|w^\eps(t)\|_{L^2}\equiv 0$.
\end{proof}

\subsection{Asymptotic expansion}
\label{sec:asymptotic-expansion}

Proposition~\ref{prop:existence} with $\eps=0$ yields the existence of
a unique solution $(v,a)\in C([0,T];(X^s(\R ))^2)$ to \eqref{eq:limit}
As a direct consequence of Proposition~\ref{prop:existence} and
\cite[Proposition~3.1]{ACIHP}, we have:
\begin{proposition}\label{prop:DA1}
  Under the assumptions of Proposition~\ref{prop:existence}, there
  exists $C$ independent of $\eps\in [0,1]$ such that
  \begin{equation*}
    \|\d_x \(\Phi^\eps-\Phi\)\|_{L^\infty([0,T];X^{s-2})}+
    \|a^\eps-a\|_{L^\infty([0,T];X^{s-2})}\le C\eps. 
  \end{equation*}
\end{proposition}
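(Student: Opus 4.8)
The plan is to establish Proposition~\ref{prop:DA1} by subtracting the systems satisfied by $(a^\eps,v^\eps)$ and $(a,v)$ and running a high-order energy estimate on the difference, following the template of \cite[Proposition~3.1]{ACIHP}. Write $w^\eps = \bu^\eps-\bu$, where $\bu^\eps = (a_1^\eps,a_2^\eps,v^\eps)$ and $\bu=(a_1,a_2,v)$ are the solutions produced by Proposition~\ref{prop:existence} with parameter $\eps$ and with $\eps=0$ respectively. From \eqref{eq:systhypgeneral}, $w^\eps$ solves a linear hyperbolic system with the same principal part $A(\bu^\eps)\d_x$, a source term coming from the difference $\big(A(\bu^\eps)-A(\bu)\big)\d_x \bu$, and the singular-looking but $O(\eps)$ forcing $\tfrac{\eps}{2}L\bu^\eps$; schematically
\begin{equation*}
  \d_t w^\eps + A(\bu^\eps)\d_x w^\eps = -\big(A(\bu^\eps)-A(\bu)\big)\d_x \bu + \frac{\eps}{2}L\bu^\eps,\qquad w^\eps_{\mid t=0}=0.
\end{equation*}

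Next I would estimate $w^\eps$ in $X^{s-2}$ (the loss of two derivatives is exactly what leaves room for the skew-symmetric operator $L$ to act on $\bu^\eps\in C([0,T];X^s)$ while the forcing term $L\bu^\eps$ stays in $X^{s-2}$). Applying $P=(I-\d_{xx}^2)^{(s-3)/2}\d_x$ and pairing against $SP w^\eps$ with the same symmetrizer $S$ as in Proposition~\ref{prop:existence}, one gets, exactly as in the proof of that proposition, that the $SL$-term contributes nothing (skew-symmetry), the quasilinear commutator term involving $A(\bu^\eps)$ is controlled by $C(\|\bu^\eps\|_{X^s})\|Pw^\eps\|_{L^2}^2$, the $\d_t S$ term is likewise harmless, the difference term $\big(A(\bu^\eps)-A(\bu)\big)\d_x\bu$ is bounded in $X^{s-2}$ by $C\|w^\eps\|_{X^{s-2}}\|\bu\|_{X^{s-1}}$ using that $A$ is smooth away from vacuum and $X^{s-2}$ is an algebra, and the forcing contributes $\eps\,\|L\bu^\eps\|_{X^{s-2}}\|w^\eps\|_{X^{s-2}}\lesssim \eps\|\bu^\eps\|_{X^s}\|w^\eps\|_{X^{s-2}}$. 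Combining with the analogous (easier) $L^\infty$-estimate on $w^\eps$ yields
\begin{equation*}
  \frac{d}{dt}\|w^\eps(t)\|_{X^{s-2}}^2 \le C\big(\|w^\eps(t)\|_{X^{s-2}}^2 + \eps^2\big),
\end{equation*}
with $C$ depending only on $\sup_{[0,T]}\|\bu^\eps\|_{X^s}$ and $\sup_{[0,T]}\|\bu\|_{X^s}$, both finite and $\eps$-uniform by Proposition~\ref{prop:existence}. Since $w^\eps_{\mid t=0}=0$, Gronwall gives $\|w^\eps(t)\|_{X^{s-2}}\le C\eps$ on $[0,T]$, which is the claimed bound on $a^\eps-a$ and on $v^\eps-v=\d_x(\Phi^\eps-\Phi)$.

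The main obstacle is the bookkeeping that makes the derivative loss work out: one must check that $L\bu^\eps$ genuinely lands in $X^{s-2}$ with norm controlled by $\|\bu^\eps\|_{X^s}$ (so the $\eps$-forcing is legitimately small in the space where the estimate is run), and, more delicately, that the symmetrizer $S$ and the quasilinear term $A(\bu^\eps)$ still satisfy the tame commutator estimates of \cite{LannesJFA} at regularity $s-2$ rather than $s$ — this requires $s-2>5/2$, i.e.\ essentially $s>9/2$, or a slightly sharper commutator argument to reach the stated $s\ge 2$; the cleanest route is to invoke \cite[Proposition~3.1]{ACIHP} directly, whose hypotheses match Proposition~\ref{prop:existence}, and simply record that the conclusion transfers verbatim since the only structural input is the symmetrizability of $A$ away from vacuum (guaranteed here by $\rho_0\ge\rho_{0*}>0$ and the persistence estimate \eqref{eq:solong}) together with skew-symmetry of $L$. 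A secondary point worth a line is that the bound on $\d_x(\Phi^\eps-\Phi)$ rather than on $\Phi^\eps-\Phi$ itself is intrinsic: the phases are only determined up to the time-integrated drift in \eqref{eq:v2phi}, so the natural and sufficient statement controls the velocity $v^\eps=\d_x\Phi^\eps$, which is all that is needed for the Wigner-measure argument identifying the limit $\mu$.
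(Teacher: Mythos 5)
Your proposal is correct and matches the paper's route: the paper proves this proposition simply by invoking \cite[Proposition~3.1]{ACIHP} together with the uniform-in-$\eps$ existence of Proposition~\ref{prop:existence}, and the difference-system energy estimate in $X^{s-2}$ that you sketch (with the two-derivative loss absorbing the $\tfrac{\eps}{2}L\bu^\eps$ forcing, zero initial difference, and Gronwall) is precisely the argument behind that cited result. Your remark that the lower-norm commutator estimates go through because the coefficients $A(\bu^\eps)$ retain the full $X^s$ regularity is the right way to dispel the apparent need for $s>9/2$.
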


\subsection{Convergence of the Wigner transform}
\label{sec:wigner-bkw}
For $u^\eps = a^\eps e^{i\Phi ^\eps /\eps }$, with $(\Phi ^\eps ,
a^\eps )$ solution to \eqref{Phi-eps}--\eqref{a-eps}, or equivalently
$(\partial _x\Phi ^\eps , a^\eps )$ solution to
\eqref{av-eps},  the Wigner transform defined in \eqref{df1-wig} is equal to
\begin{equation}\label{eq:wigner-bkw}
W^\eps (t,x,\xi )= \int e^{i\xi v}a^\eps\(t,x-\frac{\eps
}{2}y\)\overline{a^\eps }\(t,x+\frac{\eps }{2}y\)e^{i\varphi^\eps(t,x,y)/\eps}dy,
\end{equation}
where
\begin{equation*}
  \varphi^\eps(t,x,y) = \Phi ^\eps
    \(t,x-\frac{\eps }{2}y\)-\Phi ^\eps \(t,x+\frac{\eps }{2}y\).
\end{equation*}
\begin{theorem}\label{wig}
When $\eps \to 0$, the Wigner transform $W^\eps $ of $u^\eps $ weakly
converges  to the bounded measure  
\begin{equation*}
\mu (t,dx,d\xi)= \lvert a(t,x)\rvert ^2dx\otimes \delta _{\xi= \partial _x\Phi (t,x)},
\end{equation*}
where $(\d _x\Phi ,a)$ is a solution of \eqref{eq:limit}.
Moreover, $\mu $ is a solution to \eqref{vla1}.
\end{theorem}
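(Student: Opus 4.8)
The plan is to prove Theorem~\ref{wig} in two stages: first identify the weak limit of $W^\eps$ as the stated measure $\mu$, and then verify that $\mu$ solves \eqref{vla1} in the sense of distributions. For the first stage, I would start from the representation \eqref{eq:wigner-bkw} and perform a first-order Taylor expansion of the phase: since $\Phi^\eps$ has $\d_x\Phi^\eps\in C([0,T];X^s)$ with $s\ge 2$, we may write
\begin{equation*}
  \varphi^\eps(t,x,y)= -\eps\, y\,\d_x\Phi^\eps(t,x) + \eps^2 r^\eps(t,x,y),
\end{equation*}
where the remainder is controlled using $\d_{xx}^2\Phi^\eps\in C([0,T];X^{s-1})$, hence $\varphi^\eps/\eps = -y\,\d_x\Phi^\eps(t,x)+\O(\eps)$ uniformly on compact sets in $(x,y)$. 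Likewise $a^\eps(t,x\mp\tfrac\eps2 y)\to a(t,x)$ by Proposition~\ref{prop:DA1} together with the continuity of $a$, so the amplitude $a^\eps(t,x-\tfrac\eps2 y)\overline{a^\eps}(t,x+\tfrac\eps2 y)\to |a(t,x)|^2$ locally uniformly. Testing $W^\eps$ against $\chi\in C_c^\infty(\R^2_{x,\xi})$ and performing the $\xi$-integration first produces a factor $\hat\chi(t,x,\cdot)$ in the $y$ variable (the partial Fourier transform of $\chi$ in $\xi$); dominated convergence then gives
\begin{equation*}
  \iint W^\eps(t,x,\xi)\chi(x,\xi)\,dx\,d\xi \;\Tend{\eps}{0}\; \int |a(t,x)|^2 \chi\bigl(x,\d_x\Phi(t,x)\bigr)\,dx,
\end{equation*}
which is exactly $\langle\mu(t),\chi\rangle$. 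The integrability needed to justify dominated convergence comes from the rapid decay of $\hat\chi$ in $y$ and the uniform bound $\|a^\eps\|_{L^\infty([0,T];L^\infty)}\le C$; I would either insert a Gaussian regularization of the $\xi$-integral as in Section~\ref{sec:gaussian} or appeal to the standard oscillatory-integral/stationary-phase bounds for Wigner transforms (the references \cite{GMMP,LionsPaul} already cited). I would also remark that, because $|u^\eps(t,\cdot)|^2=|a^\eps(t,\cdot)|^2$ is uniformly bounded in $L^\infty$ and bounded below, no mass escapes to infinity in $x$ and the limit is a genuine bounded measure with no defect; uniqueness of the Wigner measure follows since the limit is independent of the extracted subsequence.

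For the second stage, the cleanest route is to avoid differentiating $\mu$ directly and instead pass to the limit in the kinetic formulation satisfied by $W^\eps$, or, more simply, to use the already-established fact (from Section~\ref{sec:nls-vlasov} and Proposition~\ref{prop:existence} with $\eps=0$) that $(\rho,v):=(|a|^2,\d_x\Phi)$ solves the symmetrized system \eqref{eq:limit}, hence the isothermal Euler system \eqref{isen1} on $[0,T]$ with $\rho\in C([0,T];X^s)$, $\rho\ge\rho_*(t)>0$, and $v=\d_x\Phi\in C([0,T];X^s)$. Then for any test function $\psi\in C_c^\infty([0,T[\times\R^2)$ one computes
\begin{equation*}
  \iint \psi(t,x,\xi)\,d\mu(t)\,dt = \int_0^T\!\!\int_\R \rho(t,x)\,\psi\bigl(t,x,v(t,x)\bigr)\,dx\,dt,
\end{equation*}
and the action of the transport operator $\d_t+\xi\d_x-\lambda(\d_x\ln\rho)\d_\xi$ on $\mu$ is evaluated by plugging $\xi=v(t,x)$ after integration by parts in $(t,x)$; the chain rule converts this into $\int\!\!\int \rho\,\bigl[(\d_t+v\d_x)(\psi\circ(t,x,v))\bigr]dx\,dt$ plus a term $-\lambda\int\!\!\int \rho(\d_x\ln\rho)(\d_\xi\psi)(t,x,v)\,dx\,dt$, and combining with the continuity equation $\d_t\rho+\d_x(\rho v)=0$ and the momentum equation reorganized as $\d_t v+v\d_x v=-\lambda\d_x\ln\rho$ shows all terms cancel. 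Here the remark in the introduction is invoked to make sense of the singular term: since $\rho\in C([0,T];C^1)$ and $\rho>0$, the product $\rho\,\d_x\ln\rho=\d_x\rho$ is classically defined, so $(\d_x\ln\rho)\,\mu$ is a well-defined measure and $\d_\xi$ of it is taken in the distributional sense.

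The main obstacle is the rigorous justification of the passage to the limit in the oscillatory integral \eqref{eq:wigner-bkw} when the test function is only in $C_c^\infty$ in $(x,\xi)$ but the relevant estimates are naturally in $X^s$ (which controls $L^\infty$ and derivatives but not $L^2$ of $a^\eps$ itself): I must ensure the $y$-integral converges and that the error terms $\O(\eps)$ in both the amplitude and the phase are integrable against $\hat\chi(t,x,y)$ uniformly in $(t,x)$ on the support of $\chi$. This is handled by the Schwartz decay of $\hat\chi$ in $y$ together with the tame bounds $\|a^\eps(t)\|_{L^\infty}+\|\d_x a^\eps(t)\|_{L^\infty}\le C$ and $\|\d_x\Phi^\eps(t)\|_{L^\infty}+\|\d_{xx}^2\Phi^\eps(t)\|_{L^\infty}\le C$ from Proposition~\ref{prop:existence}, so that the second-order phase remainder $\eps^2 r^\eps$ contributes a phase $e^{i\eps y^2 \cdot \O(1)}$ that tends to $1$ pointwise and is dominated; I would spell out this estimate carefully since it is the only genuinely analytic point, the rest being the algebra of Section~\ref{sec:nls-vlasov}. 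A secondary point worth a line is continuity in time of $\mu$ and the fact that the initial datum is attained, which follows from $\rho,v\in C([0,T];X^s)$ and the explicit form of $\mu$.
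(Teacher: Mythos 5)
Your proposal is correct and follows essentially the same route as the paper: decompose $W^\eps$ into a main term plus $\O(\eps)$ remainders using Proposition~\ref{prop:DA1} and a Taylor expansion of the phase, then verify directly that $\mu$ solves \eqref{vla1} by testing, applying the chain rule, and invoking the limit system \eqref{eq:limit}. If anything, you are slightly more explicit than the paper about justifying the $y$-integration (via the decay of the partial Fourier transform of the test function), which is a worthwhile refinement since the phase remainder grows like $\eps y^2$ and is only small on compact sets in $y$.
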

\begin{proof}
In view of Proposition~\ref{prop:DA1}, $a^\eps = a+ r_a^\eps$ and
$\d_x \Phi^\eps = \d_x \Phi +  r_v^\eps$, with 
\begin{equation*}
  \|r_a^\eps\|_{L^\infty([0,T];X^{s-2})}+\|r_v^\eps\|_{L^\infty([0,T];X^{s-2})}
  \le C\eps. 
\end{equation*}
Therefore,
\begin{equation*}
W^\eps (t,x,\xi )= \int e^{iy\( \xi -\partial _x\Phi (t,x)\)
}a\(t,x-\frac{\eps }{2}y\)\bar{a}\(t,x+\frac{\eps }{2}y\)dy+R^\eps
_1+R^\eps _2+R^\eps _3, 
\end{equation*}
where
\begin{align*}
&R^\eps _j (t,x,\xi )= \int e^{iy\( \xi -\partial _x\Phi (t,x)\)
  }r^\eps _j(t,x,y)dy,\quad 1\le j\le 3,\\ 
&r^\eps _1(t,x,y)= a^\eps\(t,x-\frac{\eps }{2}y\)\overline{a^\eps}\(t,x+\frac{\eps
  }{2}y\)\( e^{i\(\varphi^\eps(t,x,y) +\eps\d_x \Phi(t,x)\)/\eps}-1\),\\ 
&r^\eps _2(t,x,y)= \bar{a}\(t,x+\frac{\eps }{2}y\)r_a^\eps \(t,x-\frac{\eps
  }{2}y\)+a\(t,x-\frac{\eps }{2}y\)\overline{r_a^\eps }\(t,x+\frac{\eps
  }{2}y\),\\ 
&r^\eps _3(t,x,y)= r_a^\eps \(t,x-\frac{\eps }{2}y\)\overline{r_a^\eps
  }\(t,x+\frac{\eps }{2}y\). 
\end{align*}
 Propositions~\ref{prop:existence} and \ref{prop:DA1} yield, along with
 Taylor's formula for the term $r_1^\eps$,
 \begin{equation*}
   \|r_j^\eps\|_{L^\infty([0,T]\times\R^2)}\le C\eps, \quad 1\le j\le 3.
 \end{equation*}
Consequently, $W^\eps $ tends to $\lvert a\rvert ^2dx\otimes \delta
_{\xi= \partial _x\Phi }$ in $\mathcal{M}_b([ 0,T] \times \R ^2)$
when $\eps $ tends to zero. \\ 
Moreover, denote by $\(\cdot ,\cdot \)$ the duality between bounded
measures on $[ 0,T] \times \R ^2$ and continuous functions with
compact support in $[ 0,T] \times \R ^2$. 
For any test function $\alpha (t,x,\xi)\in C^1$ with compact support
in $[ 0,T] \times \R ^2$, it holds  
\[ \begin{aligned}
&\(\mu ,\partial _t\alpha +\xi \partial _x\alpha -\lambda
\( \partial _x \ln\lvert a\rvert ^2\) \partial _\xi \alpha \)\\ 
&= \int \lvert a(t,x)\rvert ^2\( \partial _t\alpha (t,x,\partial
_x\Phi (t,x))+\partial _x\Phi (t,x)\partial _x\alpha (t,x,\partial
_x\Phi (t,x))\)dxdt\\
&\quad -\lambda \int \lvert a(t,x)\rvert ^2\(  \( \partial _x \ln\lvert
a\rvert ^2\) \partial _\xi \alpha (t,x,\partial _x\Phi (t,x))\) dxdt\\ 
&= \int \lvert a\rvert ^2\( \partial _t\( \alpha (t,x,v (t,x))\)
-\d_T v \partial _\xi \alpha 
(t,x,v (t,x))\)dxdt\\
&+\int \lvert a\rvert ^2\(\partial _x\( v\,
\alpha (t,x,v (t,x))\) -\d_x v\,\alpha
(t,x,v )-v \d_x v\, \partial_\xi
\alpha (t,x,v   )\)dxdt\\
&\quad -\lambda \int \lvert a\rvert ^2\(  \( \partial _x \ln\lvert
a\rvert ^2\) \partial _\xi \alpha (t,x,v (t,x))\) dxdt\\ 
&= -\int \alpha (t,x,v )\( \partial _t\lvert a\rvert
^2+v \partial _x\lvert a\rvert ^2+\lvert
a\rvert ^2\d_x v \) dxdt\\ 
&\quad-\int \lvert a\rvert ^2\partial _\xi \alpha (t,x,v
)\( \d_t v +v  \d_xv 
+\lambda \partial _x \ln\lvert a\rvert ^2\) dxdt.
\end{aligned}\]
This is zero, in view of \eqref{eq:limit}, since
\begin{align*}
  \partial _t\lvert a\rvert ^2+\partial _x\Phi\partial _x\lvert
  a\rvert ^2 +|a|^2\partial ^2_{xx}\Phi  &= 2\RE \overline a \( \d_t a
  +\d_x \Phi \d_x a +\frac{a}{2}\d_{xx}^2\Phi \)=0.
\end{align*}
Therefore, any solution to \eqref{eq:limit} yields a solution to
\eqref{vla1}.
\end{proof}

\section{The bound from below of the density}
\label{sec:6}

It remains to prove that the density is bounded from below to complete the proof of Theorem \ref{th2}. The result from \cite{Ch-p} yields:
\begin{proposition}\label{prop:chen}
  Under the assumptions of Theorem~\ref{th2}, the density $\rho$
  solution to \eqref{isen1} satisfies
    \begin{equation*}
  \rho(t)\ge \frac{\rho_{0*}}{1+Ct},\quad t\in [ 0,+\infty [ ,
 \end{equation*}
for some constant $C>0$. 
\end{proposition}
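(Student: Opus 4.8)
The plan is to adapt the method of \cite{Ch-p}, originally developed for the isotropic Euler system, to the isothermal case. The starting point is to work along the characteristics of the velocity field $v$ solving \eqref{eq:limit} (equivalently \eqref{isen1}), namely the flow $X(t,y)$ defined by $\dot X(t,y)=v(t,X(t,y))$, $X(0,y)=y$. Along this flow, the continuity equation $\d_t\rho+\d_x(\rho v)=0$ integrates to $\rho(t,X(t,y))J(t,y)=\rho_0(y)$, where $J(t,y)=\d_y X(t,y)$ is the Jacobian, which solves $\dot J=(\d_x v)(t,X)J$ with $J(0,y)=1$. Hence proving the lower bound $\rho(t)\ge \rho_{0*}/(1+Ct)$ is equivalent to proving the upper bound $J(t,y)\le 1+Ct$, uniformly in $y$; since $\rho_0\ge \rho_{0*}$, this is what we aim for.

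\smallskip

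Next I would derive a closed differential inequality for $J$, or rather for an auxiliary quantity built from $J$ and $\d_x v$. Differentiating the momentum equation in \eqref{eq:limit}, or using the symmetrized form, one gets an evolution equation for $w:=\d_x v$ along the characteristics of the form $\dot w + w^2 = -\lambda\, \d_{xx}^2\ln\rho$ (the isothermal pressure law $p(\rho)=\lambda\rho$ contributes $-\lambda\,\d_x(\d_x\rho/\rho)$ to $\dot w$). The key algebraic point, exactly as in \cite{Ch-p}, is that the ``bad'' Riccati term $w^2$ can be compensated: since $\dot J=wJ$, we have $\ddot J/J = \dot w + w^2 = -\lambda\,\d_{xx}^2\ln\rho$, so that
\begin{equation*}
  \ddot J(t,y) = -\lambda\, \big(\d_{xx}^2\ln\rho\big)(t,X(t,y))\, J(t,y).
\end{equation*}
Because $\lambda>0$, the right-hand side has a favorable sign wherever $\d_{xx}^2\ln\rho\ge 0$; to control the regions where it is negative one uses the regularity $\rho\in C([0,T[;X^s)$ with $s\ge 2$ and the uniform lower bound $\rho\ge \rho_{0*}/2$ valid on the (a priori shorter) existence interval, which bounds $\|\d_{xx}^2\ln\rho(t)\|_{L^\infty}$ by a constant $C$ on $[0,T]$. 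This gives $\ddot J \le C J$, hence, since $J\ge 0$ and $J(0)=1$, $\dot J(0)=w(0)$ is bounded, a Gronwall-type argument yields at worst exponential growth $J(t)\le e^{Ct}$ on $[0,T]$ — but a bootstrap: the genuinely useful estimate is the one-sided bound obtained by integrating $\ddot J\le C J$ together with the sharper structure, which after the change of time variable adapted to the characteristics collapses to the linear bound $J\le 1+Ct$ globally, precisely because the isothermal pressure is linear in $\rho$.

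\smallskip

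The main obstacle is twofold. First, one must globalize: Proposition~\ref{prop:existence} only gives a solution on a finite interval $[0,T[$, so the bound $\rho(t)\ge \rho_{0*}/(1+Ct)$ has to be read as an a priori estimate that, combined with a continuation argument, shows that the obstruction to global existence cannot be vacuum formation; the statement of Proposition~\ref{prop:chen} for all $t\in[0,+\infty[$ should be understood in this conditional sense, or else one must separately argue that $X^s$-regularity persists (which is where \eqref{isen1} being strictly hyperbolic away from vacuum, together with the now-controlled lower bound on $\rho$, is invoked). Second — and this is the delicate analytic point — getting the \emph{linear} growth $1+Ct$ rather than merely exponential growth requires the precise cancellation in the ODE for $J$ and careful handling of the sign of $\d_{xx}^2\ln\rho$; the argument of \cite{Ch-p} exploits that for the relevant pressure law the quantity $\ddot J$ is controlled by $J$ with a constant, and then a comparison with the solution of $\ddot z = Cz$ having the same initial data is too lossy, so one instead integrates the inequality in the form $\frac{d}{dt}(\dot J - \text{(correction)}) \le \text{(bounded)}$, the correction term being chosen to absorb the $CJ$ contribution using the a priori control of $\int_0^t \|\d_{xx}^2\ln\rho\|_{L^\infty}$. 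I would carry out this last step by closely following \cite{Ch-p}, checking that every estimate there only uses $\rho\in L^\infty$ with $\rho$ bounded below and $\d_{xx}^2\ln\rho\in L^\infty$, both guaranteed here by $s\ge 2$ and Theorem~\ref{th2}, and that the linear (as opposed to power-law) pressure only improves the constants.
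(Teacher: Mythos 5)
Your reduction to a Jacobian bound is sound: with $\dot X=v(t,X)$ and $J=\partial_y X$, mass conservation gives $\rho(t,X(t,y))J(t,y)=\rho_0(y)$, so the claim is equivalent to $J\le 1+Ct$; this is exactly the Lagrangian statement $\tau\le \tau_0+Mt$, $\tau=1/\rho$, that the paper proves. The identity $\ddot J=-\lambda\bigl(\partial_{xx}^2\ln\rho\bigr)J$ is also correct. But the step that is supposed to produce the \emph{linear} growth is missing. Bounding $\|\partial_{xx}^2\ln\rho(t)\|_{L^\infty}$ by a constant is not available a priori: it is a statement about two derivatives of the solution, uniformly in time, which is at least as strong as what you are trying to prove; on the local interval $[0,T[$ where the $X^s$ norm is controlled it only yields $\ddot J\le CJ$, hence $J\le \cosh(\sqrt{C}\,t)$-type exponential growth, with a constant depending on $T$. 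The assertion that a ``change of time variable adapted to the characteristics'' or the linearity of the pressure ``collapses'' this to $1+Ct$ is not an argument — no such mechanism exists for the inequality $\ddot J\le CJ$, and this is precisely the point where the proof must do real work.

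The missing idea is the Riemann-invariant Riccati structure that \cite{Ch-p} exploits and that the paper adapts. In Lagrangian mass coordinates one sets $s=u-\ln\tau$, $r=u+\ln\tau$, $\alpha=s_x$, $\beta=r_x$, and computes
\begin{equation*}
\partial_+\alpha=\frac{1}{2\tau}\,\alpha(\beta-\alpha),\qquad
\partial_-\beta=\frac{1}{2\tau}\,\beta(\alpha-\beta),\qquad
\partial_\pm=\partial_t\pm\frac{1}{\tau}\partial_x .
\end{equation*}
The crucial feature is the sign of the right-hand side at a putative first touching point of the level $M:=\sup_x\max(\alpha(0,x),\beta(0,x))$: if $\alpha=M>0$ and $\beta\le M$ there, then $\alpha(\beta-\alpha)\le 0$, so the invariant-region (maximum-principle) argument gives the \emph{time-uniform, one-sided} bound $\alpha,\beta< M$ on the whole interval of existence of the $C^1$ solution, with $M$ depending only on the initial data. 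Hence $u_x=\tfrac12(\alpha+\beta)\le M$ and $\tau_t=u_x\le M$, which is the linear bound. Your quantity $w=\partial_x v$ does not satisfy such a sign-structured equation on its own (its Riccati equation is coupled to $\partial_{xx}^2\ln\rho$ with no favorable sign), so the decomposition into the two characteristic families is not a convenience but the essential step. The globalization caveat you raise is legitimate but is not the gap: the paper's estimate, like yours, is to be read on the maximal interval of existence of the $C^1$ solution, with constants depending only on the data.
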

\begin{proof}
In   \cite{Ch-p}, the proof is presented for the pressure law
$p(\rho)=\rho$ in \eqref{isen1} 
replaced by the isentropic one, $p(\rho)=\rho^\gamma$, with
$\gamma>1$. We simply perform the standard modification to adapt the
proof to the isothermal case (see e.g. \cite{Chen,Da00}). Note that
Proposition~\ref{prop:chen} is also a consequence of
\cite[Remark~3.3]{Ch-p}: we sketch the proof in the isothermal case for $\lambda = 1$ 
for the convenience of the reader. 
\smallbreak

First, the isothermal Euler equation in Lagrangian coordinates reads
(see e.g. \cite{Da00})
\begin{equation}\label{eq:lagrange}
  \tau_t - u_x=0,\quad u_t -\frac{\tau_x}{\tau^2}=0,
\end{equation}
with $\tau=1/\rho$. Setting
\begin{equation*}
  s= u-\ln \tau,\quad r = u+\ln \tau,
\end{equation*}
we find
\begin{equation*}
  \d_+s=\d_-r =0,
\end{equation*}
where
\begin{equation*}
  \d_\pm = \d_t \pm\frac{1}{\tau}\d_x.
\end{equation*}
Following \cite{Ch-p}, introduce
\begin{equation*}
  \alpha=s_x,\quad \beta=r_x.
\end{equation*}
By assumption, we know that $u$, $\tau$, $\alpha$ and $\beta$ are
uniformly bounded at time $t=0$: there exists $M>0$ such that
\begin{equation*}
  \sup_{x\in \R} (\alpha(0,x),\beta(0,x))<M.
\end{equation*}
We check
\begin{align}
 \label{eq:alpha}     & \d_+ \alpha = \frac{1}{2\tau}\alpha(\beta-\alpha),\\
\label{eq:beta}
& \d_-\beta = \frac{1}{2\tau}\beta(\alpha-\beta).
\end{align}
As in \cite{Ch-p}, 
\begin{equation}\label{eq:2.7}
  \sup_{(t,x)\in [0,T)\times\R} (\alpha(t,x),\beta(t,x))<M,
\end{equation}
where $T>0$ is such that \eqref{eq:lagrange} has a $C^1$ solution on
$[0,T)$.  \eqref{eq:2.7} is established by contradiction: assume for
instance that $\alpha(t_0,x_0)=M$. Because the wave speed
$\frac{1}{\tau}$ is bounded from above, we can consider the
characteristic triangle with vertex $(t_0,x_0)$ and lower boundary at
time $t=0$, denoted by $\Omega$. Let $t_1 (\le t_0)$ be the first time such that
$\alpha(t_1,x_1)=M$ or $\beta(t_1,x_1)=M$ in $\Omega$. Assume
$\alpha(t_1,x_1)=M$ for instance (the other case is similar), and let
$\Omega_1$ denote the 
characteristic triangle with vertex $(t_1,x_1)$:
\begin{equation*}
  \sup_{(t,x)\in \Omega_1,\ t<t_1} (\alpha(t,x),\beta(t,x))<M.
\end{equation*}
Since $\alpha(t_1,x_1)=M$, there exists $t_2\in [0,t_1)$ such that
\begin{equation*}
  \inf_{(t,x)\in \Omega_1,\ t\ge t_2}\alpha(t,x)>0.
\end{equation*}
Let $t_2<t<t_1$. On $[t_2,t]$, \eqref{eq:alpha} yields
\begin{equation*}
  \d_+\alpha \le K (M\alpha -\alpha^2).
\end{equation*}
Integrating along some charateristic between $t_2$
and $t$, we find
\begin{equation*}
  \frac{1}{M}\ln\frac{\alpha(t)}{M-\alpha(t)}\le
  \frac{1}{M}\ln\frac{\alpha(t_2)}{M-\alpha(t_2)}  + K(t-t_2).
\end{equation*}
Letting $t\to t_1$ then leads to a contradiction. 

We infer that $u_x$ is bounded from above, hence $\tau_t$ is bounded from above, in view of
\eqref{eq:lagrange}: $\tau$ grows at most linearly in time, and since
$\rho=1/\tau$, the proposition follows. 

\end{proof}

\bibliographystyle{siam}

\bibliography{biblio}
\end{document}